\theoremstyle{plain}
\newtheorem{theorem}{Theorem}[section]
\newtheorem{lem}[theorem]{Lemma}
\newtheorem{proposition}[theorem]{Proposition}
\newtheorem{corollary}[theorem]{Corollary}
\theoremstyle{definition}
\newtheorem{definition}[theorem]{Definition}
\newtheorem{example}[theorem]{Example}
\theoremstyle{remark}
\newtheorem{remark}[theorem]{Remark}
\numberwithin{equation}{section}
\newcommand{\A}{{\mathcal{A}}}
\begin{document}
\title[A variant of d'Alembert functional equation]{A variant of d'Alembert functional equation on monoids}

\author[A. Chahbi and E. Elqorachi]{Chahbi Abdellatif and Elqorachi Elhoucien}

\address{Ibn Zohr University, Faculty of Sciences, Department of Mathematics, Agadir, Morocco}
\email{abdellatifchahbi@gmail.com-elqorachi@hotmail.com}

\thanks{2020 Mathematics Subject
Classification.  39B32} \keywords{Topological group, monoid,
d'Alembert's equation, automorphism, involution}

\begin{abstract}In this paper, we determine the complex-valued
solutions of the functional equation
$$
f(x\sigma(y))+f(\tau(y)x)=2f(x)f(y)$$ for all $x,y \in M$, where $M$
is a  monoid, $\sigma$: $M\longrightarrow M$ is an involutive
automorphism and $\tau$: $M\longrightarrow M$ is an involutive
anti-automorphism. The solutions are expressed in terms of
multiplicative functions,  and characters of $2$-dimensional
irreducible representations of $M$.
\end{abstract}
\maketitle
\section{Introduction}
It is well-known that d'Alembert's functional equation
\begin{equation}\label{EQ1}
    f(x+y)+f(x-y)=2f(x)f(y)
\end{equation} for all $x,y\in \mathbb{R}$ has its continuous
solutions $f$: $\mathbb{R}\longrightarrow \mathbb{R}$ the functions
$f(x)=0$ and $f(x)=\cosh(cx)$, where $c\in \mathbb{R}.$ Acz\'el and
Dhombres in \cite{aczel} wrote several chapters covering and
connecting the Cauchy, d'Alembert and the trigonometric functional
equations.\\The domain of definition of the solutions of (\ref{EQ1})
have progressively been extented from $\mathbb{R}$ via abelian
groups and monoids (See Acz\'el and Dhombres \cite{aczel}, Davison
\cite{8,9} and Stetk\ae r \cite{17} for details and references), and
the functional equation (\ref{EQ1}) take the general forme
\begin{equation}\label{EQ2}
    f(xy)+\mu(y)f(x\sigma(y))=2f(x)f(y)
\end{equation} for all $x,y\in S$, where $S$ is a semigroup, $\sigma$: $S\longrightarrow S$ is an involutive
anti-automorphism of $S$, i.e., $\sigma(xy)=\sigma(y)\sigma(x)$ and
$\sigma(\sigma(x))=x$ for all $x,y\in S$, and $\mu:$
$S\longrightarrow \mathbb{C}$ is a multiplicative function on $S$:
$\mu(xy)=\mu(x)\mu(y)$ for all $x,y\in S$, which satisfies
$\mu(x\sigma(x))=1$ for all $x\in S.$\\The latest result about the
functional equation (\ref{EQ2}) is due to Stetk\ae r
\cite{stetkaer}.\\ By combining an algebraic small dimension lemma
with methods by Davison \cite{8,9} on monoids, Stetk\ae r
\cite{stetkaer}  described the solution of (\ref{EQ2}) on semigroups
in terms of multiplicative functions and $2$-dimensional irreducible
representations, like they have been in previous studies of
solutions on groups and monoids \cite{8,9}.\\\\Recently, a number of
Mathematicians have studied other versions of d'Alembert's
functional equations
\begin{equation}\label{EQ3}
    f(x\sigma(y))+f(\tau(y)x)=2f(x)g(y)
\end{equation} for all $x,y\in S$ on smigroups and monoids, and
where $\sigma,\tau$: $S\longrightarrow S$ are respectively
involutives anti-automorphisms, resp. $\sigma,\tau$:
$S\longrightarrow S$ are involutives automorphisms. (See
\cite{ebanks}, \cite{redouani}, \cite{elfassi}, \cite{fadli},
\cite{ng},   \cite{perkins} and \cite{sabour}). Since in two
situations $\sigma\circ\tau$ is an automorphism, they succefully
used some steps and ideas of Stetk\ae r \cite{19} to eliminate
$\sigma$ and $\tau$ from the equation (\ref{EQ3}) and brought the
equation to a trigonometric equation,
and consequently  non-abelian phenomena like representations crop up.\\
In this paper, we solve the functional
equation\begin{equation}\label{eq23}
    f(x\sigma(y))+f(\tau(y)x)=2f(x)f(y)
\end{equation} for all $x,y\in M$, where $M$ is a monoid,  $\sigma$: $M\longrightarrow M$
is an involutive automorphism, and $\tau$: $M\longrightarrow M$ is
involutive anti-automorphism. The crucial idea used in the paper of
Stetk\ae r \cite{19} can not carry over to the our situation, in
which $\sigma\circ\tau$ is an anti-autmorphism of $M.$  The
functional equation (\ref{eq23}) is a natural generalization of the
variant of Wilson's functional equation
equation\begin{equation}\label{EQ5}
    f(xy)+f(y^{-1}x)=2f(x)g(y),\;x,y\in G\end{equation} studied by Stetk\ae r
    \cite{stetkaer1} and solved by Ebanks and Stetk\ae r \cite{stetkaer2} on
    groups.\\ Davison in \cite{8}, \cite[ Theorem 4.12]{9} and Stetk\ae r \cite[ Theorem 8.26]{17}   obtained  continuous solutions  of
the pre-d'Alembert functional equation
\begin{equation*}
g(xyz)+g(xzy)=2g(x)g(yz)+2g(y)g(xz)+2g(xy)g(z)-4g(x)g(y)g(z)
\end{equation*} for all $x,y,z \in M$ on a topological groups and monoids.\\ In  Proposition 3.2 and
Proposition 3.3 we prove some properties of the solutions of our
functional equation (\ref{eq23}). In particular, the solutions are
central and satisfies the pre-d'Alembert functional equation
(Proposition 3.3). By using this result we  give a full description
of functions $f: M \to \mathbb{C}$ satisfying the equation
(\ref{eq23}) in terms of multiplicative functions,  and characters
of $2$-dimensional irreducible representations of $M.$ Thus
functional equation (\ref{eq23})  makes sense to solve other
functional equations by expressing their solutions in terms of
solutions of (\ref{eq23}). This is what we shall do for generalized
of variant of Van Vleck's
\begin{equation}\label{eqabdo}
f(y^{-1}xz_0) - f(x\sigma{(y)}z_0) = 2f(x)f(y), \;\  x, y \in G.
\end{equation}
and for generalized of  variant Kannappan's functional equation
\begin{equation}\label{eqabdo1}
 f(x\sigma{(y)}z_0) +f(\tau(y)xz_0)= 2f(x)f(y), \;\  x, y \in M,
\end{equation} where $z_0$ is a fixed element in $M.$ Some information, applications and numerous references concerning
Van Vleck's and  Kannappan's functional equation and their further
generalizations can be found e.g.  in \cite{be}, \cite{el},
\cite{el1}, \cite{ka} and \cite{20}-\cite{va2}.
\section{\textbf{Set Up and Notation}}Throughout this paper
 $S$ designed a semigroup, $M$ a   monoid, and $G$ a  group  with neutral element  $e.$  The map  $\sigma: M \to M$ denotes an involutive
 homomorphism and the map $\tau$ an involutive anti-automorphism. That  $\sigma, \tau:  M \to M$ are involutive means that $\tau(\tau(x))=\sigma(\sigma(x)) = x$ for all $x \in M.$
 We need the following basic definitions.
\begin{definition}
\begin{enumerate}
    \item  A function $f : S \to \mathbb{C}$ is central provided that $f(xy) = f(yx)$ for all
$x, y \in S.$
\item  A function $f : S \to \mathbb{C}$ is abelian means
$$
f(x_1 x_2 \ldots x_n) = f(x_{\epsilon(1)}x_{\epsilon(2)} \ldots x
_{\epsilon(n)})
$$
for every $ n \geq 2 $  and every permutation $ \epsilon$ of   $n$
elements of $S$.

\item  A function $f : S \to \mathbb{C}$  is $\sigma$-even if $f(\sigma (x)) = f(x)$ for all $x \in S,$ and
$\sigma$-odd if $f(\sigma (x)) = -f(x)$ for all $x \in S.$
 \item A   function $\chi$: $S\longrightarrow \mathbb{C}$ is said to be multiplicative if $\chi(xy)=\chi(x)\chi(y)$ for all  $x,y\in S$.

\end{enumerate}
\end{definition}
For any    vector space $V$, we let $\mathcal{L}(V)$ denote the
algebra of linear operators of $V$ into $V$. \\A representation of
$M$ on non-zero vector space $V$ is a map $\pi : M \to \mathcal{L}(V
)$ such that $ \pi(xy) = \pi(x)\pi(y) \;\ \textrm{for all} \;\ x, y
\in M.$ The space $V$ is called the representation space of $\pi.$
 We furthermore assume that
$\pi(e) = I:\;\ \textrm{ The identity map}$.  \\Let $\dim
V<+\infty$. The dimension of the representation $\pi$ is $d_{\pi}  =
\dim V$ and the matrix coefficient  $\chi_{\pi} : M \to \mathbb{C}$
defined by $\chi_{\pi}(x) := tr(\pi(x)),\; x \in M$ is called the
character of $\pi$.\\If $M$ is a topological space, then we let
$C(M)$ denote the algebra of continuous functions from $M$ into
$\mathbb{C}.$\\
Let $V$ be a $2$-dimensional  vector space. To any linear map $A \in
\mathcal{L}(V)$ we define its adjugate ${adj}(A) \in \mathcal{L}(V)$
from linear algebra. The properties of adjugation can easily be
derived from its matrix form, and in particular we have $adj :
\mathcal{L}(V) \to \mathcal{L}(V)$ is linear,
\begin{equation}\label{eqad}
A + adj(A) = (trA)I.
\end{equation}
 Furthermore, $A
adj(A) = adj(A)A = (detA)I, \;\ adj(AB) = adj(B) adj(A), \;\
\textrm{and} \;\ adj(adj(A)) = A$ for all $A,B \in \mathcal{L}(V).$
\section{\textbf{Some properties of equation (\ref{eq23})}}
In this section, we prove some properties  of the functional
equation (\ref{eq23}) on semigroups and monoids. In particular, we
prove that solutions of of equation (\ref{eq23}) are solutions of
pre-d'Alembert functional equation.
\begin{lem}\label{lm31}
Let $f:M \to \mathbb{C}$ be a non-zero  solution of the functional
equation (\ref{eq23}). Then
\begin{equation}\label{eq31}
\begin{cases}
f(e)=1.\\
f(\sigma(x)) + f(\tau(x))=2f(x)\\
\end{cases}
\end{equation}
\end{lem}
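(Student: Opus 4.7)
The plan is to derive both equalities by substituting the neutral element $e$ into the functional equation (\ref{eq23}), exploiting the fact that any involutive (anti-)automorphism of $M$ must fix $e$.

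First I would record that $\sigma(e)=e$ and $\tau(e)=e$. This is standard: writing $e=e\cdot e$ and applying $\sigma$ gives $\sigma(e)=\sigma(e)^{2}$, and since $\sigma$ is a bijection $\sigma(e)$ is a two-sided identity, hence equals $e$; the same reasoning works for the anti-automorphism $\tau$.

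Next I would substitute $y=e$ in (\ref{eq23}). The left-hand side collapses to $f(x)+f(x)=2f(x)$, so the equation reduces to
\begin{equation*}
2f(x)=2f(x)f(e)\qquad \text{for all } x\in M.
\end{equation*}
Since $f$ is not identically zero, there exists some $x_0\in M$ with $f(x_0)\neq 0$, and dividing by $2f(x_0)$ yields $f(e)=1$. This gives the first line of (\ref{eq31}).

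Finally, to obtain the second identity I would substitute $x=e$ in (\ref{eq23}). The left-hand side becomes $f(\sigma(y))+f(\tau(y))$, while the right-hand side is $2f(e)f(y)=2f(y)$ by the identity just proven. Relabelling $y$ as $x$ produces $f(\sigma(x))+f(\tau(x))=2f(x)$, completing the proof. No real obstacle is anticipated: the argument is a direct specialization at $e$, and the only subtle point is justifying $\sigma(e)=\tau(e)=e$, which is immediate from the involutive (anti-)automorphism hypothesis.
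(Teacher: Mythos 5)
Your proposal is correct and follows essentially the same route as the paper: set $y=e$ to obtain $f(e)=1$ from $f\neq 0$, then set $x=e$ and use $f(e)=1$ to get $f(\sigma(x))+f(\tau(x))=2f(x)$. The only difference is that you explicitly verify $\sigma(e)=\tau(e)=e$ (which the paper leaves implicit), and your surjectivity argument for that point is sound.
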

\begin{proof}
Take $y=e$ in (\ref{eq23}) we get $f(x)=f(e)f(x).$ Since $f \neq 0$ then $f(e)=1.$\\
Putting   $ x=e$ in (\ref{eq23}) we get the following identity
\begin{equation*}
f(\sigma(y))+f(\tau(y))=2f(e)f(y).
\end{equation*}
Using that $f(e)=1$  we get
\begin{equation*}
f(\sigma(y)) + f(\tau(y))=2f(y)
\end{equation*} for all $y\in M.$ In the following proposition we
eliminate the involution anti-automorphism $\tau$ from the
functional equation (\ref{eq23}).
\end{proof}
\begin{proposition}\label{pr12}
Let $f:S \to \mathbb{C}$  be a non-zero  solution  of the functional
equation (\ref{eq23}). Then
\begin{equation}\label{eqah}
f(\sigma(a)\sigma(x)\sigma(y))+f(\sigma(a)\sigma(y)\sigma(x))=2f(\sigma(a))f(yx)+2f(x)f(\sigma(a)\sigma(y))+2f(\sigma(a)\sigma(x))f(y)-4f(\sigma(a))f(x)f(y),
\end{equation}
for all  $ a,x,y \in S.$
\end{proposition}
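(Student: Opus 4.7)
The plan is to apply \eqref{eq23} repeatedly — six substitutions in all — to derive \eqref{eqah}. The key idea is that each $\tau$-term produced by a single application of \eqref{eq23} can itself be rewritten by a further application; after two rounds of such rewriting all $\tau$-terms collapse, and a copy of the original left-hand side re-appears on the right with the opposite sign, which is then transposed and the equation divided through by $2$.

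\textbf{Step 1.} First, I apply \eqref{eq23} with $x$ replaced by $\sigma(a)\sigma(x)$, and symmetrically with $x$ replaced by $\sigma(a)\sigma(y)$ (and $x,y$ interchanged). Adding the two resulting equations yields
\begin{equation*}
f(\sigma(a)\sigma(x)\sigma(y)) + f(\sigma(a)\sigma(y)\sigma(x)) = 2f(\sigma(a)\sigma(x))f(y) + 2f(\sigma(a)\sigma(y))f(x) - T,
\end{equation*}
where $T := f(\tau(y)\sigma(a)\sigma(x)) + f(\tau(x)\sigma(a)\sigma(y))$.

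\textbf{Step 2.} Next, I rewrite each summand of $T$ by applying \eqref{eq23} with $x \mapsto \tau(y)\sigma(a)$, $y \mapsto x$ (respectively $x \mapsto \tau(x)\sigma(a)$, $y \mapsto y$). Since $\tau$ is an anti-automorphism, $\tau(x)\tau(y) = \tau(yx)$ and $\tau(y)\tau(x) = \tau(xy)$, so
\begin{equation*}
T = 2f(\tau(y)\sigma(a))f(x) + 2f(\tau(x)\sigma(a))f(y) - f(\tau(yx)\sigma(a)) - f(\tau(xy)\sigma(a)).
\end{equation*}

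\textbf{Step 3.} Every remaining $\tau$-expression has the form $f(\tau(z)\sigma(a))$, which one further application of \eqref{eq23} (with $x = \sigma(a)$, $y = z$) turns into $2f(\sigma(a))f(z) - f(\sigma(a)\sigma(z))$. For $z \in \{x, y\}$ this produces only mixed products of the target type; for $z \in \{xy, yx\}$, using that $\sigma$ is an \emph{automorphism} so $\sigma(xy) = \sigma(x)\sigma(y)$ and $\sigma(yx) = \sigma(y)\sigma(x)$, it re-introduces the original left-hand side.

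\textbf{Step 4.} Substituting everything back into Step 1 yields an equation of the form $L = R - L$, where $L$ denotes the left-hand side of \eqref{eqah} and $R$ contains the target terms together with the symmetric combination $f(\sigma(a))\bigl(f(xy) + f(yx)\bigr)$. Transposing and dividing by $2$ gives the identity; the coefficient $2f(\sigma(a))f(yx)$ appears because specialising $a = e$ in the resulting identity and invoking $f(e) = 1$ from Lemma \ref{lm31} forces $f(xy) = f(yx)$, so centrality of $f$ emerges as a byproduct of the derivation. The only real obstacle is the bookkeeping in Step 4, which is straightforward but error-prone.
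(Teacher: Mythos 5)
Your Steps 1--3 are each individually correct, but the symmetrization in Step 1 destroys exactly the information the proposition asserts, and Step 4 does not recover it. Writing $L$ for the left-hand side of \eqref{eqah} as you do, and carrying your substitutions through to the end, what you actually obtain is
\begin{equation*}
L=2f(\sigma(a)\sigma(x))f(y)+2f(x)f(\sigma(a)\sigma(y))-4f(\sigma(a))f(x)f(y)+f(\sigma(a))\bigl(f(xy)+f(yx)\bigr),
\end{equation*}
i.e.\ the $x\leftrightarrow y$-symmetrization of \eqref{eqah}, with $f(\sigma(a))\bigl(f(xy)+f(yx)\bigr)$ in place of $2f(\sigma(a))f(yx)$. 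These agree only if $f$ is central, and your proposed way of getting centrality --- set $a=e$ and invoke $f(e)=1$ --- fails: every term of your identity is already invariant under interchanging $x$ and $y$ (the left side is $L$, and the right side contains the symmetric sum $f(xy)+f(yx)$, not $f(yx)$ alone), so specializing $a=e$ or swapping $x$ and $y$ produces no new relation and certainly does not force $f(xy)=f(yx)$. This is not a removable bookkeeping issue: in the paper centrality is a \emph{consequence} of the asymmetric identity \eqref{eqah} (Proposition \ref{pr22}(1), obtained precisely by interchanging $x$ and $y$ there), so assuming centrality in order to reach \eqref{eqah} would be circular.

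The repair is to not add the two instances in Step 1. Keep only the instance for the pair $(\sigma(a)\sigma(x),y)$, namely $f(\sigma(a)\sigma(x)\sigma(y))+f(\tau(y)\sigma(a)\sigma(x))=2f(\sigma(a)\sigma(x))f(y)$, and unwind its single $\tau$-term: applying \eqref{eq23} to the pair $(\tau(y)\sigma(a),x)$ produces the term $f(\tau(x)\tau(y)\sigma(a))=f(\tau(yx)\sigma(a))$, and one further application to the pair $(\sigma(a),yx)$ turns this into $2f(\sigma(a))f(yx)-f(\sigma(a)\sigma(y)\sigma(x))$. The second summand of $L$ thus enters with a minus sign on the right and is transposed to the left, while the coefficient $2f(\sigma(a))f(yx)$ --- asymmetric in $x$ and $y$ --- survives intact. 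This is exactly the paper's argument: four applications of \eqref{eq23}, no symmetrization, and no division by $2$.
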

\begin{proof} For the pair
$(\sigma(a)\sigma(x), y)$ the functional equation (\ref{eq23} ) can
be written as follows
\begin{equation}\label{eq32}
f(\sigma(a)\sigma(x)\sigma(y)) +
f(\tau(y)\sigma(a)\sigma(x))=2f(\sigma(a)\sigma(x))f(y).
\end{equation}
By applying (\ref{eq23} ) to the pair $(\tau(y)\sigma(a),x)$  we
obtain
\begin{equation}\label{eq33}
f(\tau(y)\sigma(a)\sigma(x)) +
f(\tau(x)\tau(y)\sigma(a))=2f(\tau(y)\sigma(a))f(x)=2f(x)[2f(\sigma(a))f(y)-f(\sigma(a)\sigma(y))].
\end{equation} By using (\ref{eq23}),
we reformulate the second term on the left hand side of (\ref{eq33})
as follows
\begin{equation*}
f(\tau(x)\tau(y)\sigma(a))=f(\tau(yx)\sigma(a))=2f(\sigma(a))f(yx)-f(\sigma(a)\sigma(yx)),
\end{equation*}
which turns the identity (\ref{eq33}) into  we obtain
\begin{equation*}
f(\tau(y)\sigma(a)\sigma(x))
+2f(\sigma(a))f(yx)-f(\sigma(a)\sigma(y)\sigma(x))=4f(\sigma(a))f(x)f(y)-2f(x)f(\sigma(a)\sigma(y)).
\end{equation*}
Subtracting this from (\ref{eq32}) we get after some simplifications
that
\begin{equation}\label{eq34}
f(\sigma(a)\sigma(x)\sigma(y))+f(\sigma(a)\sigma(y)\sigma(x))=2f(\sigma(a))f(yx)+2f(x)f(\sigma(a)\sigma(y))+2f(\sigma(a)\sigma(x))f(y)-4f(\sigma(a))f(x)f(y)
\end{equation} for all $a,x,y\in S.$ This ends the proof.
\end{proof}

\begin{proposition}\label{pr22}
Let $f:S \to \mathbb{C}$ be a non-zero  solution  of the functional
equation (\ref{eq23}). Then
\begin{enumerate}
    \item  $f$ is  central.
    \item $
f(\sigma(x)) + f(\tau(x))=\lambda f(x)$ for all $x \in S$, and for
same $\lambda \in \mathbb{C}^{*}.$
\item $f$ is a solution of the pre-d'Alembert functional equation
i.e.,
\begin{equation*}
f(axy)+f(ayx)=2f(a)f(xy)+2f(x)f(ay)+2f(ax)f(y)-4f(a)f(x)f(y)
\end{equation*}
for all  $ a,x,y \in S.$

\end{enumerate}

\end{proposition}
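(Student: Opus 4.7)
The plan is to establish the three assertions in the order stated, leveraging the identity (\ref{eqah}) of Proposition \ref{pr12}, the original equation (\ref{eq23}), and the fact that $\sigma$ is an involutive automorphism while $\tau$ is an involutive anti-automorphism.

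For part (1), I would observe that the left-hand side of (\ref{eqah}) is manifestly invariant under the transposition $x\leftrightarrow y$, since the pair $\sigma(a)\sigma(x)\sigma(y)$, $\sigma(a)\sigma(y)\sigma(x)$ is merely permuted. The right-hand side must therefore share this symmetry. Three of its four summands are already symmetric in $x,y$; the sole asymmetric term, $2f(\sigma(a))f(yx)$, becomes $2f(\sigma(a))f(xy)$ under the swap. Equating yields $f(\sigma(a))\bigl[f(xy)-f(yx)\bigr]=0$ for all $a,x,y\in S$. Since $f\not\equiv 0$ and $\sigma$ is a bijection, one can choose $a\in S$ with $f(\sigma(a))\neq 0$, so $f(xy)=f(yx)$, proving centrality.

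For part (2), I use centrality to rewrite (\ref{eq23}) in the symmetric form
$$f(x\sigma(y))+f(x\tau(y))=2f(x)f(y).$$
Substituting $x\mapsto\sigma(x)$ and then $x\mapsto\tau(x)$ and summing the resulting identities gives
$$f(\sigma(x)\sigma(y))+f(\sigma(x)\tau(y))+f(\tau(x)\sigma(y))+f(\tau(x)\tau(y))=2\psi(x)f(y),$$
where $\psi(x):=f(\sigma(x))+f(\tau(x))$. By centrality each of the four terms on the left equals its $x\leftrightarrow y$ counterpart, so the left-hand side is symmetric in $x,y$. Hence $\psi(x)f(y)=\psi(y)f(x)$, and together with $f\not\equiv 0$ this forces $\psi=\lambda f$ for a unique scalar $\lambda\in\mathbb{C}$. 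To establish $\lambda\neq 0$ I would argue by contradiction: assuming $f\circ\sigma=-f\circ\tau$ and combining with (\ref{eq23}) through substitutions $y\mapsto\sigma(y),\tau(y)$ should force enough identities to contradict $f\not\equiv 0$.

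For part (3), I substitute $(a,x,y)\mapsto(\sigma(a),\sigma(x),\sigma(y))$ in (\ref{eqah}). Because $\sigma$ is an involutive automorphism, the left-hand side simplifies to $f(axy)+f(ayx)$, while the right-hand side becomes an expression in $f\circ\sigma$ evaluated at various products. Using $f\circ\sigma=\lambda f-f\circ\tau$ from part (2), each occurrence of $f\circ\sigma$ is replaced; the resulting $f\circ\tau$-terms are then disposed of via further applications of (\ref{eq23}) together with centrality. The cancellations should reassemble into the pre-d'Alembert identity.

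The main obstacle is the intricate bookkeeping in part (3): the substitution of $f\circ\sigma$ and the subsequent clearance of $\tau$-terms must conspire precisely to reproduce the pre-d'Alembert right-hand side, with $\lambda$ entering in a consistent way. A secondary but non-trivial hurdle is verifying $\lambda\neq 0$ in part (2).
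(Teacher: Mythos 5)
Parts (1) and the first half of part (2) are fine and essentially reproduce the paper's argument: symmetrizing (\ref{eqah}) in $x,y$ gives centrality, and adding the two substituted versions of (\ref{eq23}) gives $\psi(x)f(y)=\psi(y)f(x)$, hence $\psi=\lambda f$. However, you leave two genuine gaps. The smaller one is $\lambda\neq 0$: you only state that the substitutions $y\mapsto\sigma(y),\tau(y)$ ``should force'' a contradiction. The actual argument (which does work, and is what the paper does) is: if $\lambda=0$ then $f\circ\sigma=-f\circ\tau$; applying (\ref{eq23}) to the pairs $(\sigma(x),\tau(y))$ and $(\tau(x),\sigma(y))$, adding, and using centrality makes the left-hand sides cancel identically, leaving $0=-4f(\sigma(x))f(\sigma(y))$, so $f\circ\sigma=0$ and hence $f=0$ by bijectivity of $\sigma$. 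You need to carry this out rather than assert it.

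The serious gap is part (3). Substituting $(a,x,y)\mapsto(\sigma(a),\sigma(x),\sigma(y))$ into (\ref{eqah}) gives
\begin{equation*}
f(axy)+f(ayx)=2f(a)f(\sigma(yx))+2f(\sigma(x))f(ay)+2f(ax)f(\sigma(y))-4f(a)f(\sigma(x))f(\sigma(y)),
\end{equation*}
and replacing each $f\circ\sigma$ by $\lambda f-f\circ\tau$ produces terms such as $f(\tau(x))f(ay)$, $f(a)f(\tau(yx))$ and $f(a)f(\tau(x))f(\tau(y))$, plus a $\lambda^{2}$ cross term. There is no identity available at this stage that lets you ``dispose of'' these $\tau$-terms; eliminating them is exactly as hard as the original problem, and your plan does not explain how the books balance. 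The missing idea is to derive a second, independent identity: the $\tau$-analogue of Proposition \ref{pr12}, obtained by applying (\ref{eq23}) to the pairs $(\tau(x)\tau(a),y)$ and $(\tau(a)\sigma(y),x)$, which yields
\begin{equation*}
f(\tau(axy))+f(\tau(ayx))=2f(\tau(a))f(yx)+2f(x)f(\tau(ay))+2f(\tau(ax))f(y)-4f(\tau(a))f(x)f(y).
\end{equation*}
Adding this to (\ref{eqah}) (rewritten via $\sigma(a)\sigma(x)\sigma(y)=\sigma(axy)$) makes every composite argument appear in the combination $f(\sigma(\cdot))+f(\tau(\cdot))=\lambda(\cdot)$, and dividing by $\lambda\neq 0$ and invoking centrality gives the pre-d'Alembert equation at once. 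Without this companion identity your part (3) does not go through.
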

\begin{proof}
\begin{enumerate}
    \item According to Proposition \ref{pr12}  $f$ satisfies (\ref{eqah}). Interchange of $x$ and $y$ in (\ref{eqah}) and comparing the result obtained with (\ref{eqah})  we  find  that
 $f(\sigma(a))f(xy)=f(\sigma(a))f(yx)$ for all  $ a,x,y \in S.$  Since $f \neq 0,$ then $f$ is central.
    \item Replace $x$ by $\sigma(x)$ in (\ref{eq23}), we obtain
    \begin{equation}\label{eqel}
    f(\sigma(x)\sigma(y))+f(\tau(y) \sigma(x))=2f(\sigma(x))f(y), \;\ x,y \in S.
    \end{equation}
    Substitute $x$ by $\tau(x)$ in in (\ref{eq23}), we obtain
    \begin{equation}\label{eqel1}
    f(\tau(x)\sigma(y))+f(\tau(y) \tau(x))=2f(\tau(x))f(y), \;\ x,y \in S.
    \end{equation}
    Adding  (\ref{eqel}) and  (\ref{eqel1}) and use that $f$ is central we obtain
    $$[f(\sigma(y)\sigma(x))+f(\tau(x)\sigma(y))]+[f(\tau(y)\sigma(x))+f(\tau(x)\tau(y))]$$$$=
    f(x)(f(\sigma(y))+f(\tau(y)))=f(y)(f(\sigma(x))+f(\tau(x)))
    $$ for all $x,y \in S$. Since $f \neq 0$ then there exists $y_0 \in S$ such that $f(y_0) \neq 0,$
and  that
    $f(\sigma(x))+f(\tau(x))= \lambda f(x)$ for all $x \in S$, and  where $\lambda =
    \frac{f(\sigma(y_0))+f(\tau(y_0))}{f(y_0)}.$\\
    Now, if $\lambda =0,$ we get $f(\sigma(x))=-f(\tau(x))$ for all $x \in S.$ By replacing $x$ by $\sigma(x)$, $y$ by $\tau(y)$ respectively
     $x$ by $\tau(x)$ and $y$ by $\sigma(y)$ in (\ref{eq23}) we
    get
        $$
        \begin{cases}
        f(\sigma(x)\sigma(\tau(y)))+f(y\sigma(x))=2f(\sigma(x))f(\tau(y)) \\
        f(\tau(x)y)+f(\tau(\sigma(y))\tau(x))=2f(\tau(x))f(\sigma(y))
        \end{cases}
        $$
        Since $f(\sigma(x))=-f(\tau(x)), \;\ x \in S$ and $f$ is central,
        so by adding the two functional equations of the above system
        we get
        $$
        -f(\tau(x)y)-f( y\sigma(x))+f(y\sigma(x))+f(\tau(x)y)= -4 f(\sigma(x))f(\sigma(y))=0
        $$ for all $x,y \in S$. Since $\sigma^{2}=I,$ so $f=0,$ which  is a contradiction  with $ f
\neq 0.$
    \item Let $a, x, y \in S$ be arbitrary.
For the pair $(\tau(x)\tau(a), y)$  equation (\ref{eq23} implies
that
\begin{equation}\label{eq321}f(\tau(x)\tau(a)\sigma(y))+
f(\tau(y)\tau(x)\tau(a)) =2f(\tau(x)\tau(a))f(y).
\end{equation}
By applying (\ref{eq23} ) to the pair $(\tau(a)\sigma(y),x)$  we
obtain
\begin{equation}\label{eq331}f(\tau(a)\sigma(y)\sigma(x))+
f(\tau(x)\tau(a)\sigma(y))
=2f(\tau(a)\sigma(y))f(x)=2f(x)[2f(\tau(a))f(y)-f(\tau(y)\tau(a))].
\end{equation}
By using (\ref{eq23} ) we reformulate the first term on the left
hand side of (\ref{eq331}) as follows
\begin{equation*}
f(\tau(a)\sigma(yx))=2f(\tau(a))f(yx)-f(\tau(x)\tau(y)\tau(a)),
\end{equation*}
which turns the identity (\ref{eq331}) into
\begin{equation*}
2f(\tau(a))f(yx)-f(\tau(x)\tau(y)\tau(a))+f(\tau(x)\tau(a)\sigma(y))=4f(\tau(a))f(x)f(y)-2f(x)f(\tau(y)\tau(a)).
\end{equation*}
Subtracting this from (\ref{eq321}) we get after some
simplifications that
\begin{equation}\label{eq341}
f(\tau(axy))+f(\tau(ayx))=2f(\tau(a))f(yx)+2f(x)f(\tau(ay))+2f(\tau(ax))f(y)-4f(\tau(a))f(x)f(y)
\end{equation}
By adding  (\ref{eqah}) and (\ref{eq341}), and using Proposition 3.3
(2), we conclude that $f$ is a solution of pre-d'Alembert functional
equation on semigroup $S.$ This completes the proof.
\end{enumerate}
\end{proof}
Now, we are ready to prove the main result.
\begin{theorem}\label{th31} \textbf{Abelian solutions of equation
(\ref{eq23})}.\\ The abelian solutions $f:M\rightarrow \mathbb{C}$
of (\ref{eq23}) are the functions of the form
$$f=\frac{\chi+\chi\circ\sigma\circ\tau}{2},$$ where $\chi:M\rightarrow \mathbb{C}$ is a multiplicative function such that:\\
\textbf{(i)} $\chi\circ\sigma\circ\tau=\chi\circ\tau\circ\sigma,$ and\\
  {\textbf{(ii)}} $\chi$ is $\sigma$-even and/or $\tau$-even.
\end{theorem}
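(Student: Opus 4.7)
The plan is to reduce the problem to the classification of abelian solutions of the pre-d'Alembert equation and then extract the remaining structure from comparing coefficients in (\ref{eq23}).

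Let $f$ be a nonzero abelian solution of (\ref{eq23}). By Proposition~\ref{pr22}, $f$ is central and satisfies the pre-d'Alembert equation; being abelian moreover, the classical classification of abelian pre-d'Alembert solutions on monoids (Davison~\cite{9}, Stetk\ae r~\cite[Thm.~8.26]{17}) produces multiplicative functions $\chi_1,\chi_2:M\to\mathbb{C}$ with
$$f=\frac{\chi_1+\chi_2}{2}.$$

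Substituting this decomposition back into (\ref{eq23}), using the automorphism / anti-automorphism properties of $\sigma,\tau$ together with the commutativity of $\mathbb{C}$, and then separating coefficients of the linearly independent functions $\chi_1(x),\chi_2(x)$ (valid when $\chi_1\neq\chi_2$), I obtain the two relations
$$\chi_j\circ\sigma+\chi_j\circ\tau=\chi_1+\chi_2\qquad(j=1,2).$$
Because distinct multiplicative characters on a monoid are linearly independent, the $j=1$ relation forces at least two of the four functions $\chi_1,\chi_2,\chi_1\circ\sigma,\chi_1\circ\tau$ to coincide. A short case analysis --- which also subsumes the degenerate case $\chi_1=\chi_2$, where $f$ itself is multiplicative and is easily shown to be both $\sigma$- and $\tau$-even --- shows, after relabeling $\chi:=\chi_1$, that either (a) $\chi\circ\sigma=\chi$ with $\chi_2=\chi\circ\tau$, or (b) $\chi\circ\tau=\chi$ with $\chi_2=\chi\circ\sigma$. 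This is exactly (ii).

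With (ii) in hand, condition (i) falls out of the $j=2$ relation. In branch (a) it reads $\chi\circ\tau\circ\sigma+\chi=\chi+\chi\circ\tau$, hence $\chi\circ\tau\circ\sigma=\chi\circ\tau$; combined with $\chi\circ\sigma\circ\tau=\chi\circ\tau$ (apply $\sigma$-evenness of $\chi$ at $\tau(y)$), one gets $\chi\circ\sigma\circ\tau=\chi\circ\tau\circ\sigma$, which is (i), and moreover $\chi\circ\sigma\circ\tau=\chi_2$. Branch (b) is symmetric. Either way, $f=(\chi+\chi\circ\sigma\circ\tau)/2$.

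The converse (sufficiency of (i), (ii)) is a direct expansion: under those assumptions, the sum $f(x\sigma(y))+f(\tau(y)x)$ collapses to $\tfrac12\bigl(\chi(x)+\chi\circ\sigma\circ\tau(x)\bigr)\bigl(\chi(y)+\chi\circ\sigma\circ\tau(y)\bigr)=2f(x)f(y)$ after repeated use of multiplicativity, the (anti-)homomorphism properties of $\sigma,\tau$, and the collapsing identities from (i), (ii). The main obstacle is the case analysis from the $j=1$ relation: one must rule out exotic branches such as $\chi\circ\sigma=\chi\circ\tau$ with neither equal to $\chi$, where imposing multiplicativity on the forced candidate $\chi_2=2\chi\circ\sigma-\chi$ yields the identity $(\chi\circ\sigma(x)-\chi(x))(\chi\circ\sigma(y)-\chi(y))=0$ for all $x,y$, hence $\chi\circ\sigma=\chi$, collapsing back into (ii).
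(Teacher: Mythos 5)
Your proposal is correct and follows essentially the same route as the paper: reduce to the pre-d'Alembert equation via Proposition~\ref{pr22}, invoke the known decomposition $f=(\chi_1+\chi_2)/2$ of abelian pre-d'Alembert solutions, substitute back into (\ref{eq23}), and use linear independence of distinct multiplicative functions to force $\chi_j\circ\sigma+\chi_j\circ\tau=\chi_1+\chi_2$ and hence conditions (i) and (ii). Your case analysis of that relation (including ruling out the branch $\chi\circ\sigma=\chi\circ\tau\neq\chi$) is in fact spelled out more carefully than in the paper, which simply asserts the conclusion $\chi_2=\chi_1\circ\sigma\circ\tau=\chi_1\circ\tau\circ\sigma$ from $\chi_1\neq\chi_2$.
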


\begin{proof}
It is elementary to check that the functions stated in the Theorem
\ref{th31} define abelian solutions, so it is left to show that any
abelian solution $f: M \to  \mathbb{C}$ of (\ref{eq23}) can be
written as in the form in Theorem \ref{th31}. We have from
proposition \ref{pr22} that $f$ is  solution of pre-d'Alembert
functional equation on a Monoid $M,$ and which satisfies $f(e)=1$,
(see Lemma 3.1). If $f$ is an abelian solution by \cite[Theorem
8.13.]{17} then there exist multiplicative functions $\chi_1,\chi_2
: M \to \mathbb{C}$ such that $ f=\frac{\chi_1+\chi_2}{2}.$ We will
discuss two cases.\\ Case 1. If $\chi_1=\chi_2,$ then letting
$\chi:=\chi_1=\chi_2$ we have $f=\chi.$ Substituting $f=\chi$ into
(\ref{eq23}) we get that $\chi\circ\sigma+\chi\circ\tau=2\chi.$ So
$\chi=\chi\circ\sigma=\chi\circ\tau.$ Then $f$ has the desired
form.\\Case 2. If $\chi_1\neq\chi_2,$ substituting
$f=(\chi_1+\chi_2)/2$ into (\ref{eq23}) we find after a reduction
that
\begin{eqnarray*}
&&\chi_1(x)[\chi_1(\sigma(y))+\chi_1(\tau(y))-\chi_1 (y)-\chi_2(y)] \\
&&+\chi_2(x)[\chi_2(\sigma(y))+\chi_2(\tau(y))-\chi_1
(y)-\chi_2(y)]]=0
\end{eqnarray*}
for all $x,y\in M.$ Since $\chi_1\neq\chi_2,$ we get from the theory
of multiplicative functions (see for instance (\cite[Theorem
3.18]{17}) that both terms are $0,$ so
\begin{equation}\label{eq348}
\left\{
    \begin{aligned}
    &\chi_1(x)[\chi_1(\sigma(y))+\chi_1(\tau(y))-\chi_1 (y)-\chi_2(y)]=0&\\
    &\chi_2(x)[\chi_2(\sigma(y))+\chi_2(\tau(y))-\chi_1 (y)-\chi_2(y)]=0&
    \end{aligned}
    \right.
\end{equation}
for all $x,y\in M.$ Since $\chi_1 \neq\chi_2$ at least one of $\chi_1$ and $\chi_2$ is not zero.\\
Now if $\chi_1 \neq 0$ and $ \chi_2=0.$ It is allowed that
$\chi_1(\sigma(y))+\chi_1(\tau(y))=\chi_1 (y)$ this implies that
$\chi_1(\sigma(y))=0 \;\ \textrm{or} \;\  \chi_1(\tau(y))=0,$ in
either case  $\chi_1 = 0$ because $\sigma^2= \tau^2=id.$

We have now   $ \chi_1 \neq 0$ and $\chi_2 \neq 0.$ From
(\ref{eq348}), we get
\begin{equation*}
\chi_1 +\chi_2=\chi_1 \sigma+\chi_1 \circ \tau=\chi_2 \circ
\sigma+\chi_2 \circ \tau .
\end{equation*}
Using that $\chi_1\neq\chi_2,$ we obtain that $\chi_2 =\chi_1 \circ
\sigma\circ\tau=\chi_1 \circ \tau\circ\sigma $ now as we see that
$\chi_1 +\chi_2=\chi_1 \sigma+\chi_1 \circ \tau $ this implies that
$\chi_1 $ is is $\sigma$-even or $\tau$-even.  Finally,  we deduce
that $f$ has the form  stated in Theorem \ref{th31} with
$\chi=\chi_1.$\end{proof}
\begin{theorem}\label{th311}  \textbf{Non-abelian solutions of equation
(\ref{eq23})}.\\ The non-abelian solutions $f$ $M\longrightarrow
\mathbb{C}$ of (\ref{eq23}) are of the form
$f(x)=\frac{1}{2}tr\pi(x)$ for all $x\in M$, where $\pi$ is an
irreducible, $2$-dimensional representation of $M$ for which
$adj(\pi \circ \sigma)=\pi\circ\tau,$ $
\textrm{tr}(\pi(x))=\textrm{tr}(\pi(\sigma(x)))=\textrm{tr}(\pi(\tau(x)))=\textrm{tr}(\pi(\sigma\circ\tau(x)))=\textrm{tr}(\pi(\tau\circ\sigma(x))),$
 for all $ x \in M.$
\\If $\pi'$ is any irreducible representation of $M$ on a finite dimensional vector space, such that $f(x)=\frac{1}{2}tr\pi'(x)$ for all $x\in M$,
then $\pi'$ and $\pi$ are  equivalent.
\end{theorem}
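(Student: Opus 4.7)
My plan is to verify the non-trivial (forward) direction of the characterization; the converse---that any $f=\frac{1}{2}\operatorname{tr}\pi$ satisfying the stated conditions solves (\ref{eq23})---is elementary from (\ref{eqad}) and cyclicity of the trace. By Proposition~\ref{pr22}, $f$ is central and satisfies the pre-d'Alembert functional equation on $M$, with $f(e)=1$ by Lemma~\ref{lm31}. Since $f$ is non-abelian, I would invoke the classification of non-abelian pre-d'Alembert solutions on monoids (Davison \cite{9}, cf.\ \cite[Theorem~8.26]{17}) to obtain an irreducible 2-dimensional representation $\pi$ of $M$ with $f=\frac{1}{2}\operatorname{tr}\pi$. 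Substituting this back into (\ref{eq23}) and using cyclicity of the trace yields
\begin{equation*}
\operatorname{tr}\bigl(\pi(x)[\pi(\sigma(y))+\pi(\tau(y))]\bigr)=\operatorname{tr}(\pi(x))\,\operatorname{tr}(\pi(y)) \qquad \forall x,y\in M.
\end{equation*}
Since $\pi$ is 2-dimensional and irreducible, Burnside's theorem says $\pi(M)$ spans the 4-dimensional algebra $\mathcal{L}(V)$, and nondegeneracy of the pairing $(S,T)\mapsto \operatorname{tr}(ST)$ on $\mathcal{L}(V)$ then forces the pointwise identity
\begin{equation*}
\pi(\sigma(y))+\pi(\tau(y)) = \operatorname{tr}(\pi(y))\,I \qquad \forall y\in M.
\end{equation*}

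Comparing this identity with (\ref{eqad}) applied to $\pi(\sigma(y))$, the relation $\operatorname{adj}(\pi\circ\sigma)=\pi\circ\tau$ will drop out the moment I know that $f\circ\sigma=f$, i.e., $\operatorname{tr}(\pi(\sigma(y)))=\operatorname{tr}(\pi(y))$. This will be the main obstacle. To overcome it, I replace $y$ by $\sigma(y)$ in the last display to get $\pi(\tau\sigma(y))=2f(\sigma(y))\,I-\pi(y)$, and then exploit that $\tau\circ\sigma$ is an \emph{anti}-homomorphism, so $\pi(\tau\sigma(xy))=\pi(\tau\sigma(y))\pi(\tau\sigma(x))$. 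Expanding both sides with the formula just derived and comparing against the classical $2\times 2$ identity
\begin{equation*}
\pi(x)\pi(y)+\pi(y)\pi(x) = \operatorname{tr}(\pi(x))\,\pi(y) + \operatorname{tr}(\pi(y))\,\pi(x) + \bigl[\operatorname{tr}(\pi(xy))-\operatorname{tr}(\pi(x))\operatorname{tr}(\pi(y))\bigr]\,I
\end{equation*}
(itself a consequence of (\ref{eqad}) together with Cayley-Hamilton), and setting $h:=f-f\circ\sigma$, the mixed matrix terms will reorganize into
\begin{equation*}
h(x)\pi(y)+h(y)\pi(x) \in \mathbb{C}\,I \qquad \forall x,y\in M.
\end{equation*}
Choosing $y_0\in M$ with $\pi(y_0)\notin\mathbb{C}\,I$ (such $y_0$ must exist, since otherwise $\pi(M)\subset\mathbb{C}\,I$ would contradict irreducibility), the case $h(y_0)\neq 0$ would force $\pi(M)\subset\mathbb{C}\,\pi(y_0)+\mathbb{C}\,I$, a 2-dimensional subspace of the 4-dimensional $\mathcal{L}(V)$, which contradicts the Burnside spanning property; hence $h(y_0)=0$, and the last display then forces $h\equiv 0$.

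With $f=f\circ\sigma$ in hand, Lemma~\ref{lm31} immediately yields $f=f\circ\tau$, and composition gives $f=f\circ\sigma\tau=f\circ\tau\sigma$, completing all the trace-invariances in the theorem; the adjugate relation $\operatorname{adj}(\pi\circ\sigma)=\pi\circ\tau$ then follows from the displayed identity of the first paragraph via (\ref{eqad}). For the uniqueness clause, any irreducible finite-dimensional representation $\pi'$ of $M$ satisfying $f=\frac{1}{2}\operatorname{tr}\pi'$ must have $\dim\pi'=\operatorname{tr}(\pi'(e))=2f(e)=2$, so $\pi'$ and $\pi$ are both 2-dimensional irreducible representations of $M$ with identical characters $2f$, and the standard linear independence of characters of inequivalent irreducible representations (which on a monoid follows from Burnside applied to the direct sum $\pi\oplus\pi'$) delivers $\pi'\simeq\pi$.
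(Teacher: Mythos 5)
Your proposal is correct, and up to the Burnside step it coincides with the paper's proof: both pass through Proposition \ref{pr22} and Lemma \ref{lm31} to get a central, non-abelian pre-d'Alembert solution with $f(e)=1$, invoke \cite[Theorem 8.26]{17} (Davison \cite{9}) to write $f=\frac{1}{2}\operatorname{tr}\pi$ with $\pi$ irreducible and $2$-dimensional, and use Burnside's theorem to upgrade the trace identity to the pointwise relation $\pi(\sigma(y))+\pi(\tau(y))=\operatorname{tr}(\pi(y))I$, i.e.\ \eqref{bur}. Where you diverge is in extracting the $\sigma$-invariance of the trace and the adjugate relation: the paper substitutes $y\mapsto\sigma(y),\tau(y)$ in \eqref{bur}, averages to get $\operatorname{adj}(\pi(y))=\frac{1}{2}\bigl(\pi(\tau\sigma(y))+\pi(\sigma\tau(y))\bigr)$ via \eqref{eqad}, and then uses the anti-multiplicativity of $\operatorname{adj}$ to reach \eqref{eqabdel1} and conclude $\pi(\tau\sigma(y))=\pi(\sigma\tau(y))=\operatorname{adj}(\pi(y))$; you instead exploit the anti-multiplicativity of $\pi\circ\tau\circ\sigma$ directly, together with the $2\times 2$ identity $AB+BA=\operatorname{tr}(A)B+\operatorname{tr}(B)A+[\operatorname{tr}(AB)-\operatorname{tr}(A)\operatorname{tr}(B)]I$, to show $h(x)\pi(y)+h(y)\pi(x)\in\mathbb{C}I$ for $h=f-f\circ\sigma$, and kill $h$ by the Burnside spanning property. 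Your route is arguably cleaner exactly where the paper is terse (the passage from \eqref{eqabdel1} to $\pi(\tau\sigma(y))=\pi(\sigma\tau(y))$ needs a short case analysis that the paper glosses over), and it yields $f\circ\sigma=f\circ\tau=f$ and $\operatorname{adj}(\pi\circ\sigma)=\pi\circ\tau$ in one stroke; the paper's route stays entirely inside adjugate calculus and produces the intermediate identity $\operatorname{adj}(\pi(y))=\pi(\tau\sigma(y))=\pi(\sigma\tau(y))$, which is of independent interest. You also prove the final equivalence clause explicitly ($\dim\pi'=\operatorname{tr}\pi'(e)=2f(e)=2$ plus linear independence of characters of inequivalent irreducibles), whereas the paper inherits it from the uniqueness statement in \cite[Theorem 8.26]{17}; your parenthetical that this independence follows from ``Burnside applied to $\pi\oplus\pi'$'' is slightly loose (the direct sum is not irreducible, so one needs the standard density/Wedderburn refinement), but the fact itself is standard and the argument is easily repaired.
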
\begin{proof} By Proposition 3.3 (3),
 $f$ is a non-abelian   solution of  the pre-d'Alembert functional
 equation on $M.$ Then from \cite[ Theorem 8.26]{17} $f$ has the form
 $f=\frac{1}{2}tr(\pi)$, where $\pi$ is an irreducible,  $2$-dimensional
 representation of $M$. Substituting $f$ into the functional equation (\ref{eq23}) and using that $f$ is central we get $$ tr\biggl(
\pi(x)(\pi(\sigma(y)+\pi(\tau(y))-tr(\pi(y))I) ) \biggr)=0, \;\ x,y
\in M.$$ Now, Burnside Theorem's \cite{lomonosov} shows that
$$ tr\biggl(
A(\pi(\sigma(y)+\pi(\tau(y))-tr(\pi(y))I ) \biggr)=0,$$ for all
$A\in \mathbf{L}(V)$ and for all $y \in M,$ and consequently, we get
\begin{equation}\label{bur}
\pi(\sigma(y)+\pi(\tau(y))=tr(\pi(y))I_2\end{equation} for all $y
\in M.$ Replacing $y$ by $\sigma(y)$ respectively by $\tau(y)$ in
(\ref{bur}) we obtain
\begin{equation} \label{eqabdel}
\begin{cases}
\pi(y)+\pi(\tau(\sigma(y)))=tr(\pi(\sigma(y)))I_2\\
\pi(y)+\pi(\sigma(\tau(y)))=tr(\pi(\tau(y)))I_2
\end{cases}
\end{equation}
 By adding the two functional equation of the above system and
using that $f(\sigma(y))+f(\tau(y))=2f(y)$ for all $y\in M,$ we
obtain
$$
\pi(y)+\frac{\pi(\tau(\sigma(y)))+\pi(\sigma(\tau(y)))}{2}=tr(\pi(y))I_2,
$$ for all $y\in M.$ From the properties of adjugation (2.1) we get
$adj(\pi(y))=\frac{\pi(\tau(\sigma(y)))+\pi(\sigma(\tau(y)))}{2},$
for all $y\in M.$ Since $adj(\pi(xy))=adj(\pi(x)\pi(y))=adj(\pi(y))
adj(\pi(x))$ then \begin{equation} \label{eqabdel1}
\pi(\tau(\sigma(y)))(\pi(\tau(\sigma(x)))-\pi(\sigma(\tau(x))))=\pi(\sigma(\tau(y)))(\pi(\tau(\sigma(x)))-\pi(\sigma(\tau(x)))),
\;\ x,y \in M.
\end{equation}
Now by using (\ref{eqabdel}) we get
$$(\pi(\tau(\sigma(y)))-\pi(\sigma(\tau(y))))=(tr(\pi(\sigma(y)))-tr(\pi(\tau(y))))I_2,
\;\ y \in M,$$ since (\ref{eqabdel1}) then
$adj(\pi(y))=\pi(\tau(\sigma(y)))=\pi(\sigma(\tau(y))),$ this
implies that $adj(\pi(\sigma(y)))=\pi(\tau(y)).$ By the condition
$f(\sigma(y))+f(\tau(y))=2f(y)$ we see that
$tr(\pi(\sigma(y)))+tr(\pi(\tau(y)))=2 tr(\pi(y),$ so we get
$tr(\pi(\sigma(y)))= tr(\pi(y).$ Finally we obtain the desired
non-abelian solution.
\end{proof}
\begin{remark}
By using similar proof  (see Proposition 3.3), we show the
centrality of the solutions of the functional equation (1.4), when
$\sigma$ is an involutive anti-automorphism and $\tau$ is in
involutive automorphism. So, the order of $\sigma$ and $\tau$ in the
functional equation (1.4) is not important.
\end{remark}
The following  corollaries are immediate consequences of Theorem
\ref{th31} and Theorem \ref{th311}.

\begin{corollary} Let $G$ be a  topological group  and $\sigma$: $G\longrightarrow G$ be a continuous involutive automorphism of $G$. There are two non-zero solutions $f:G\rightarrow \mathbb{C}$ of the functional equation
$$
f(x\sigma(y))+f(y^{-1}x)=2f(x)f(y), \ x,y \in G,$$ abelian and
non-abelian:

\begin{enumerate}
    \item [$\rm(1)$] Every {abelian} continuous solution $f$ is of the form
   $$f(x)=\frac{\chi(x)+\chi(\sigma(x^{-1})}{2},$$ where $\chi:G\rightarrow \mathbb{C}$ is a continuous multiplicative function such that $\chi$ is $\sigma$-even and/or $\chi(x)=\chi(x^{-1})$.

\item[$\rm(2)$] Every {non-abelian} continuous solution $f$ is of the form
$$f(x)=\frac{1}{2}tr\pi(x), \ \ \ x\in G,$$ where $\pi$ is a continuous irreducible unitary representation of dimension $2$ for which $adj(\pi(\sigma(x)))=\pi(x^{-1}),$ and  $tr(\pi(\sigma(x)))=tr(\pi(x))$.
\end{enumerate}

\end{corollary}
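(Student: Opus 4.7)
The plan is to deduce both parts of the corollary as special cases of Theorem \ref{th31} and Theorem \ref{th311} by taking $\tau: G \to G$ to be the inversion $\tau(y) = y^{-1}$, which is indeed an involutive anti-automorphism of the group $G$. Since $G$ is a topological group and $\sigma$ is assumed continuous, inversion is also continuous, so we remain in the continuous setting throughout. The only genuine work is to translate the abstract conditions of the two theorems into the concrete conditions stated in the corollary, and to check that continuity is preserved when passing from $f$ to the multiplicative function $\chi$ or the representation $\pi$.

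For part (1), I would apply Theorem \ref{th31}. Writing $\tau(y)=y^{-1}$, the form $f = (\chi + \chi \circ \sigma \circ \tau)/2$ becomes $f(x) = (\chi(x) + \chi(\sigma(x^{-1})))/2$ as stated. Condition \textbf{(i)} of Theorem \ref{th31} reads $\chi(\sigma(y^{-1})) = \chi(\sigma(y)^{-1})$, and this is automatic because $\sigma$ is a group homomorphism (so $\sigma(y^{-1}) = \sigma(y)^{-1}$) and $\chi$ is multiplicative. Condition \textbf{(ii)} becomes: $\chi$ is $\sigma$-even, or $\tau$-even in the sense $\chi(y^{-1}) = \chi(y)$, which is exactly the condition in the corollary. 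Continuity of $\chi$ follows from continuity of $f$: on the open set where $\chi(\sigma(x^{-1})) \ne -\chi(x)$ one can solve for $\chi$ in terms of $f$ and the multiplicative structure, or appeal to standard results on continuous solutions of Levi--Civita type equations on topological groups (as in Stetk\ae r \cite{17}).

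For part (2), I would apply Theorem \ref{th311}. With $\tau$ equal to inversion, the condition $\mathrm{adj}(\pi \circ \sigma) = \pi \circ \tau$ translates directly to $\mathrm{adj}(\pi(\sigma(x))) = \pi(x^{-1})$. The chain of trace equalities in Theorem \ref{th311} collapses to a single identity: since $\mathrm{tr}(\mathrm{adj}(A)) = \mathrm{tr}(A)$ in two dimensions (an immediate consequence of \eqref{eqad}), we have
\[
\mathrm{tr}(\pi(x^{-1})) = \mathrm{tr}(\mathrm{adj}(\pi(\sigma(x)))) = \mathrm{tr}(\pi(\sigma(x))),
\]
and likewise $\mathrm{tr}(\pi(\sigma(x^{-1}))) = \mathrm{tr}(\pi(\sigma(x)))$, so the five traces in Theorem \ref{th311} coincide as soon as $\mathrm{tr}(\pi(\sigma(x))) = \mathrm{tr}(\pi(x))$, which is exactly the trace condition listed in the corollary. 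To obtain that $\pi$ can be chosen continuous and unitary, I would invoke the standard fact that a continuous finite-dimensional irreducible representation of a topological group admits, in the relevant classical settings used in \cite{17}, a unitary realization on an appropriate inner product, so that the non-abelian form given by Theorem \ref{th311} can be written with $\pi$ unitary.

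The main obstacle is not algebraic but topological: verifying that the multiplicative function $\chi$ (in part (1)) or the representation $\pi$ (in part (2)) inherits continuity from $f$. For $\chi$, this is handled by the usual argument that on a topological group a non-zero continuous solution of a Levi--Civita equation has continuous multiplicative components; for $\pi$, continuity of the matrix coefficients, and hence of $\pi$ itself, follows from $f = \tfrac{1}{2}\mathrm{tr}(\pi)$ being continuous together with irreducibility and the explicit recovery of $\pi$ from $f$ via the adjugate formula used in the proof of Theorem \ref{th311}. Once continuity is secured, both parts of the corollary are straightforward rewordings of the two main theorems.
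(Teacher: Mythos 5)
Your proposal is correct and takes essentially the same route as the paper: the paper's proof is exactly the specialization $\tau(x)=x^{-1}$ in Theorem \ref{th31} and Theorem \ref{th311}. Your extra checks (that condition (i) is automatic because $\sigma(y^{-1})=\sigma(y)^{-1}$, and that the trace identities collapse since $\operatorname{tr}(adj(A))=\operatorname{tr}(A)$ in dimension $2$) just make explicit details the paper leaves unsaid.
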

\begin{proof}
It suffices to take $\tau(x) = x^{-1}$ for all $x \in G$ in Theorem
\ref{th31} and Theorem \ref{th311}.
\end{proof}

\begin{corollary} Let $M$ be a topological monoid and $\tau$ be a continuous involutive anti-automorphism of $M$. There are two non-zero continuous solutions $f:M\rightarrow \mathbb{C}$ of the functional equation
$$
f(xy)+f(\tau(y)x)=2f(x)f(y), \ x,y \in G,$$ abelian and non-abelian:

\begin{enumerate}
    \item [$\rm(1)$] Every {abelian} continuous solution $f$ is of the form
   $$f=\frac{\chi+\chi\circ\tau}{2},$$ where $\chi:M\rightarrow \mathbb{C}$ is a continuous multiplicative function.

\item[$\rm(2)$] Every {non-abelian} continuous solution $f$ is of the form
$$f(x)=\frac{1}{2}tr\pi(x), \ \ \ x\in M,$$ where $\pi$ is a continuous irreducible unitary representation of dimension $2$ for which $adj(\pi)=\pi\circ\tau.$
\end{enumerate}
\end{corollary}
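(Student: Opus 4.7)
The plan is to deduce this corollary from Theorem~\ref{th31} and Theorem~\ref{th311} by specializing $\sigma$ to the identity map $\id_M$. Since $\id_M$ is trivially a continuous involutive automorphism of $M$ and the equation $f(xy)+f(\tau(y)x)=2f(x)f(y)$ is exactly (\ref{eq23}) for this choice of $\sigma$, every continuous solution of the equation in the corollary is a continuous solution of (\ref{eq23}) with $\sigma=\id_M$. I then translate the conclusions of the two main theorems back.

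For the abelian case, Theorem~\ref{th31} yields $f=(\chi+\chi\circ\sigma\circ\tau)/2$ with $\chi$ multiplicative, which for $\sigma=\id_M$ reads $f=(\chi+\chi\circ\tau)/2$. Condition~(i), namely $\chi\circ\sigma\circ\tau=\chi\circ\tau\circ\sigma$, degenerates to the tautology $\chi\circ\tau=\chi\circ\tau$, and condition~(ii), that $\chi$ be $\sigma$-even and/or $\tau$-even, is automatically satisfied since $\chi\circ\id_M=\chi$. Hence no restriction beyond multiplicativity remains, which is precisely the stated abelian form.

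For the non-abelian case, Theorem~\ref{th311} produces an irreducible $2$-dimensional representation $\pi$ with $adj(\pi\circ\sigma)=\pi\circ\tau$; taking $\sigma=\id_M$ this reduces directly to $adj(\pi)=\pi\circ\tau$. The trace conditions listed in Theorem~\ref{th311} are not separate assumptions: by identity~(\ref{eqad}) one has $\tr(adj(A))=\tr(A)$ for every $A\in\mathcal{L}(V)$ with $\dim V=2$, so $\tr(\pi\circ\tau)=\tr(adj(\pi))=\tr(\pi)$, and the remaining equalities $\tr(\pi)=\tr(\pi\circ\sigma)$ and $\tr(\pi\circ\sigma\circ\tau)=\tr(\pi\circ\tau\circ\sigma)$ are trivial once $\sigma=\id_M$. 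This leaves exactly the condition stated in part~(2) of the corollary.

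The only real bookkeeping is the transfer of continuity: a continuous $f$ should force continuity of $\chi$ in the abelian decomposition, and continuity of $\pi$ in the non-abelian one. In the abelian case this is standard for topological monoids because of the separation of multiplicative characters; in the non-abelian case the matrix coefficients of $\pi$ are affine combinations of translates of $f$, so continuity of $f$ passes to $\pi$. Equipping the $2$-dimensional representation space with a suitable inner product then produces the unitary representative. This is the same routine step used in the proof of the previous corollary, and I do not expect any obstacle beyond it.
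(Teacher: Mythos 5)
Your proposal is correct and follows essentially the same route as the paper, whose proof is simply to take $\sigma=\id_M$ in Theorem~\ref{th31} and Theorem~\ref{th311}; your extra observations (that conditions (i)--(ii) and the trace identities become automatic, via $\tr(adj(A))=\tr(A)$ in dimension $2$) are sound but not needed beyond the specialization itself.
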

\begin{proof}
It suffices to take $\sigma(x) = x$ for all $x \in M$ in Theorem
\ref{th31} and Theorem \ref{th311}.
\end{proof}

\begin{corollary}Let $G$ be a topological group. There are two non-zero continuous solutions $f:G\rightarrow \mathbb{C}$ of the functional equation
$$
f(xy)+f(y^{-1}x)=2f(x)f(y), \ x,y \in G,$$ abelian and non-abelian:

\begin{enumerate}
    \item [$\rm(1)$] Every {abelian} continuous solution $f$ is of the form
   $$f(x)=\frac{\chi(x)+\chi(x^{-1})}{2}, \;\ x \in G$$ where $\chi:G\rightarrow \mathbb{C}^{*}$ is a continuous multiplicative function.

\item[$\rm(2)$] Every {non-abelian} continuous solution $f$ is of the form
$$f(x)=\frac{1}{2}tr\pi(x), \ \ \ x\in G,$$ where $\pi$ is a continuous irreducible unitary representation of dimension $2$ for which $adj(\pi(x))=\pi(x^{-1}).$
\end{enumerate}
\end{corollary}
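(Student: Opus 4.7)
The plan is to obtain the corollary as a direct specialization of Theorem \ref{th31} and Theorem \ref{th311} to the case $\sigma = \id_G$ (the identity, which is trivially a continuous involutive automorphism of $G$) and $\tau(x) = x^{-1}$ (a continuous involutive anti-automorphism of the group $G$). Under these choices the general equation $f(x\sigma(y)) + f(\tau(y)x) = 2f(x)f(y)$ reduces to the one stated in the corollary, so every non-zero continuous solution falls under Theorem \ref{th31} (abelian case) or Theorem \ref{th311} (non-abelian case), and it remains only to translate the abstract conditions into the concrete ones involving $x^{-1}$.

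For part (1), I would apply Theorem \ref{th31}. The prescribed form $f = (\chi + \chi \circ \sigma \circ \tau)/2$ becomes $f(x) = \tfrac{1}{2}(\chi(x) + \chi(x^{-1}))$ because $\sigma \circ \tau = \tau$. Condition (i), $\chi \circ \sigma \circ \tau = \chi \circ \tau \circ \sigma$, is automatic since both sides equal $\chi \circ \tau$. Condition (ii), demanding $\chi$ be $\sigma$-even and/or $\tau$-even, is satisfied vacuously through $\sigma$-evenness because $\sigma = \id$; no constraint remains on $\chi$ beyond multiplicativity. Finally, $\chi(x)\chi(x^{-1}) = \chi(e) = 1$ forces $\chi$ to take values in $\mathbb{C}^{*}$, and continuity of $\chi$ is inherited from that of $f$ through the pointwise expressions.

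For part (2), I would apply Theorem \ref{th311}. Substituting $\sigma = \id$ and $\tau(x) = x^{-1}$, the condition $\mathrm{adj}(\pi \circ \sigma) = \pi \circ \tau$ becomes $\mathrm{adj}(\pi(x)) = \pi(x^{-1})$. The trace identity $\mathrm{tr}\,\pi(x) = \mathrm{tr}\,\pi(\sigma(x))$ is trivial, and the remaining identities all amount to $\mathrm{tr}\,\pi(x) = \mathrm{tr}\,\pi(x^{-1})$, which follows for free because for any $2 \times 2$ matrix $A$ one has $\mathrm{tr}\,A = \mathrm{tr}(\mathrm{adj}(A))$ by \eqref{eqad}, so $\mathrm{tr}\,\pi(x^{-1}) = \mathrm{tr}(\mathrm{adj}(\pi(x))) = \mathrm{tr}\,\pi(x)$. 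Continuity of $\pi$ (and, where relevant, unitarizability once a continuous representation with the required properties is in hand) transfers from the continuity of $f$.

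The proof is essentially bookkeeping; the only place where I would slow down is checking that the collection of trace conditions in Theorem \ref{th311} genuinely collapses to the single condition $\mathrm{tr}\,\pi(\sigma(x)) = \mathrm{tr}\,\pi(x)$ stated in the corollary, and that no hidden restriction is lost in the reduction. Once that is confirmed via the adjugate trace identity, the two parts follow immediately by specialization.
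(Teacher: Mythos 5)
Your proposal is correct and follows essentially the same route as the paper, which proves this corollary simply by specializing Theorem \ref{th31} and Theorem \ref{th311} to $\sigma = \id$ and $\tau(x) = x^{-1}$; your extra bookkeeping (the collapse of the trace conditions via $\tr(\operatorname{adj}A) = \tr A$ and the nonvanishing of $\chi$ on a group) just makes explicit what the paper leaves implicit.
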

\begin{proof}
It suffices to take $\sigma(x) = x$ and $\tau(x)=x^{-1}$ for all $x
\in G$ in Theorem \ref{th31} and Theorem \ref{th311}.
\end{proof}
\section{Some applications and examples}
In this section, we determine solutions of the functional equations
(\ref{eqabdo}) and (\ref{eqabdo1}). We show that the solutions are
closely related to the solutions  of (\ref{eq23}).
\begin{proposition}\label{prabdo}
Let  $z_0$ in $ Z(G)$ the center of $G$ be given. Let $f : G \to
\mathbb{C}$ be a non-zero solution of the functional equation
(\ref{eqabdo}). Then
\begin{enumerate}
    \item $f(e)=0.$
    \item $f(y^{-1}z_0)=f(\sigma(y)z_0), \;\ y \in G.$
    \item $f(y^{-1})=-f(\sigma(y)), \;\ y \in G.$
    \item $f(y \sigma(z_0)z_0)=-f(y), \;\ y \in G.$
    \item $f(\sigma(z_0) z_0)=0.$
    \item $ f(z_0)=1.$
    \item $ f(z_0^{2})=0.$
\end{enumerate}
\end{proposition}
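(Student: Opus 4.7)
The plan is to verify the seven items in an order different from the one listed: first handle (1), (2), (5) by direct substitution, next prove the key identity (3) (which is the main obstacle), and finally derive (6), (4), (7) in sequence. Setting $y=e$ in (\ref{eqabdo}) gives $0=2f(x)f(e)$, so $f(e)=0$ since $f\not\equiv 0$, which is (1). Setting $x=e$ and using $f(e)=0$ yields (2): $f(y^{-1}z_0)=f(\sigma(y)z_0)$. Specialising (2) at $y=z_0$ produces $f(e)=f(\sigma(z_0)z_0)$, which by (1) gives (5).

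The heart of the proof is (3). Take $y=x$ in (\ref{eqabdo}) to obtain $f(x\sigma(x)z_0) = f(z_0) - 2f(x)^2$. Separately, substitute $x=\sigma(y)^{-1}$ in (\ref{eqabdo}); using $\sigma(y)^{-1}\sigma(y)=e$ this becomes $f((\sigma(y)y)^{-1}z_0) - f(z_0) = 2f(\sigma(y)^{-1})f(y)$. Apply (2) to the first term with $u=\sigma(y)y$, and note $\sigma(\sigma(y)y)=y\sigma(y)$, so the first term equals $f(y\sigma(y)z_0)$, which by the previous identity equals $f(z_0)-2f(y)^2$. Eliminating $f(z_0)$ yields
\begin{equation*}
f(y)\bigl(f(y)+f(\sigma(y)^{-1})\bigr)=0 \qquad\text{for all } y\in G.
\end{equation*}
Thus $f(\sigma(y)^{-1})=-f(y)$ whenever $f(y)\neq 0$. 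The main obstacle is extending this to all of $G$. Argue contrapositively: if $f(y_0)=0$ but $f(\sigma(y_0)^{-1})\neq 0$, apply the identity with $y:=\sigma(y_0)^{-1}$; since $\sigma^2=\id$ one computes $\sigma(\sigma(y_0)^{-1})^{-1}=y_0$, forcing $f(y_0)=-f(\sigma(y_0)^{-1})\neq 0$, a contradiction. Substituting $y\mapsto\sigma(y)$ in $f(\sigma(y)^{-1})=-f(y)$ then produces (3).

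With (3) in hand the remaining items follow quickly. Setting $x=z_0^{-1}$ in (\ref{eqabdo}) and using $z_0\in Z(G)$ gives $f(y^{-1})-f(\sigma(y))=2f(z_0^{-1})f(y)$, which by (3) collapses to $f(y^{-1})=f(z_0^{-1})f(y)$. Taking $y=z_0$ yields $f(z_0^{-1})(f(z_0)-1)=0$; the option $f(z_0^{-1})=0$ would force $f\equiv 0$ via the collapsed relation, so $f(z_0)=1$, which is (6). Next, taking $y=z_0$ in (\ref{eqabdo}) and using $z_0^{-1}xz_0=x$ together with (6) gives $f(x\sigma(z_0)z_0)=(1-2f(z_0))f(x)=-f(x)$, which is (4). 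Finally, apply (4) with $y=\sigma(z_0)^{-1}z_0$: the left side simplifies (via centrality of $z_0$ and $\sigma(z_0)$) to $f(z_0^2)$, so $f(z_0^2)=-f(\sigma(z_0)^{-1}z_0)$; and (2) at $y=\sigma(z_0)$ gives $f(\sigma(z_0)^{-1}z_0)=f(z_0^2)$. Combining these yields $f(z_0^2)=-f(z_0^2)$, i.e.\ (7).
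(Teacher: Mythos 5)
Your proof is correct, but at several points it runs along a genuinely different track than the paper's. Items (1), (2), (5) coincide with the paper's substitutions. For the key item (3), the paper replaces $x$ by $\sigma(x^{-1})$ in (\ref{eqabdo}), rewrites both arguments via (2), and adds the result to the original equation; this yields $2\bigl(f(x)+f(\sigma(x^{-1}))\bigr)f(y)=0$ for \emph{all} $x,y$, so one non-vanishing value of $f$ gives the oddness relation for every $x$ at once, with no case analysis. Your route (setting $y=x$ and $x=\sigma(y)^{-1}$, then using (2)) only produces the pointwise quadratic dichotomy $f(y)\bigl(f(y)+f(\sigma(y)^{-1})\bigr)=0$, which you then globalize by the symmetry argument at $y=\sigma(y_0)^{-1}$; I checked this argument and it is sound (the case where both values vanish is trivially consistent), but it is more delicate than the paper's linear-in-$f(y)$ identity and relies on the invariance of the dichotomy under $y\mapsto\sigma(y)^{-1}$. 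You also reverse the paper's dependency between (4) and (6): the paper gets (4) from (2) and (3) (via $y=\sigma(x)z_0$ in (2)) and then (6) from $y=z_0$ in (\ref{eqabdo}) together with (4), whereas you first extract the relation $f(y^{-1})=f(z_0^{-1})f(y)$ from $x=z_0^{-1}$ and (3), deduce (6) using surjectivity of inversion, and only then obtain (4); this is a clean alternative. Your (7) (via (4) and (2) at $y=\sigma(z_0)$) differs in detail from the paper's (via (2) at $y=z_0^{-1}$ and (3)) but is equally valid; note that in simplifying $\sigma(z_0)^{-1}z_0\sigma(z_0)z_0=z_0^{2}$ the centrality of $z_0$ alone already suffices (though $\sigma(z_0)\in Z(G)$ does hold, the center being characteristic).
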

\begin{proof}
\begin{enumerate}
    \item Substituting $y=e$ into (\ref{eqabdo}) and using that $f \neq 0,$ we get that $f(e)=0.$
        \item Take $x=e$ in (\ref{eqabdo}) and using that $f(e)=0$ we obtain $f(y^{-1}z_0)=f(\sigma(y)z_0)$ for all $y \in G.$
        \item Replacing $x$ by $\sigma(x^{-1})$ in (\ref{eqabdo}) we get
        $$
f(y^{-1}\sigma(x^{-1})z_0) - f(\sigma(x^{-1})\sigma(y)z_0) =
2f(\sigma(x^{-1}))f(y), \;\  x, y \in G.
$$
 This implies that      $$
f((\sigma(x)y)^{-1}z_0) - f((\sigma(y^{-1})\sigma(x))^{-1}z_0) =
2f(\sigma(x^{-1}))f(y), \;\  x, y \in G.
$$
Using (2) we get \begin{equation}\label{eqabdo2}
f(x\sigma(y)z_0)-f(y^{-1}xz_0)   = 2f(\sigma(x^{-1}))f(y), \;\  x, y
\in G.
\end{equation}
From (\ref{eqabdo})  and  that $f \neq 0$ we obtain
$f(x^{-1})=-f(\sigma(x))$ for all $x\in G.$
\item Taking $y=\sigma(x) z_0$ into (2) and using that $z_0 \in Z(G)$ we get $f(\sigma(x^{-1}))=f(x \sigma(z_0) z_0)$
since $f(x^{-1})=-f(\sigma(x)),$ then $f(x \sigma(z_0) z_0)=-f(x).$
\item Putting $y=z_0$ in (2) we get $ 0=f(e)=f(\sigma(z_0) z_0).$
\item Replacing $y$ by $z_0$ in (\ref{eqabdo}) and using $z_0 \in Z(G)$  we have $f(x)- f(x \sigma(z_0) z_0)=2f(x)f(z_0),$
since  $f(x \sigma(z_0) z_0)=-f(x),$ so $2f(x)=2f(x)f(z_0),$ but $ f
\neq 0$ then $ f(z_0)=1.$
\item Replacing $y$ by $z_{0}^{-1}$ in (2) and using (3) we obtain
$f(z_{0}^{2})=f(\sigma(z_{0}^{-1})z_0)=f(\sigma(z_{0}^{-1}\sigma(z_0))=-f(\sigma(z_{0})^{-1}z_0)$
then $f(z_{0}^{2})=0$
\end{enumerate}
\end{proof}
\begin{lem}\label{lemabdo}
Let  $z_0$ in $ Z(G)$ be given. Let $f : G \to \mathbb{C}$ be a
non-zero solution of the functional equation (\ref{eqabdo}). Then
\begin{enumerate}
    \item $f(z_{0}^{-1})^{2}=f(\sigma(z_0))^2=1.$
    \item $ f(z_{0}^3)=f(z_{0}^{-1})=-f(\sigma(z_0)).$
\end{enumerate}
\end{lem}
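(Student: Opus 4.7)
The plan is to derive both parts by judicious substitutions into (\ref{eqabdo}) and then invoking Proposition \ref{prabdo}, with no work beyond tracking how the centrality of $z_0$ lets us regroup products.

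For part (1), I would set $x=y=z_0^{-1}$ in (\ref{eqabdo}). Because $z_0\in Z(G)$, the argument of the first $f$ collapses to $z_0$ and the argument of the second to $\sigma(z_0)^{-1}$, so (\ref{eqabdo}) becomes
\[
f(z_0)-f(\sigma(z_0)^{-1})=2f(z_0^{-1})^{2}.
\]
Now item (6) of Proposition \ref{prabdo} gives $f(z_0)=1$, and item (3) with $y=\sigma(z_0)$ gives $f(\sigma(z_0)^{-1})=-f(z_0)=-1$. Hence $2f(z_0^{-1})^{2}=2$, i.e.\ $f(z_0^{-1})^{2}=1$. The second half, $f(\sigma(z_0))^{2}=1$, is then immediate from item (3) with $y=z_0$, which reads $f(z_0^{-1})=-f(\sigma(z_0))$.

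For part (2), the equality $f(z_0^{-1})=-f(\sigma(z_0))$ is again item (3) with $y=z_0$, so the real task is to prove $f(z_0^{3})=f(z_0^{-1})$. My strategy is to produce two specialisations of (\ref{eqabdo}) whose left-hand sides, after simplification using $z_0\in Z(G)$, both take the form $f(z_0^{3})-f(\sigma(z_0)^{-1}z_0^{2})$, and then compare them. The natural first choice $(x,y)=(z_0,z_0^{-1})$ yields
\[
f(z_0^{3})-f(\sigma(z_0)^{-1}z_0^{2})=2f(z_0^{-1}).
\]
The second choice $(x,y)=(\sigma(z_0)^{-1}z_0^{2},\sigma(z_0)^{-1})$ is engineered so that both $y^{-1}xz_0$ and $x\sigma(y)z_0$ simplify exactly as in the first case; combined with $f(\sigma(z_0)^{-1})=-1$ obtained in part (1), it gives
\[
f(z_0^{3})-f(\sigma(z_0)^{-1}z_0^{2})=-2f(\sigma(z_0)^{-1}z_0^{2}),
\]
whence $f(\sigma(z_0)^{-1}z_0^{2})=-f(z_0^{3})$. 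Plugging this back into the first identity eliminates the auxiliary term and leaves $2f(z_0^{3})=2f(z_0^{-1})$.

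The main obstacle is spotting the second substitution: the first is obvious, but the trick of choosing $y=\sigma(z_0)^{-1}$ so that $\sigma(y)=z_0^{-1}$ cancels one factor of $z_0$ in $x\sigma(y)z_0$, and simultaneously choosing $x=\sigma(z_0)^{-1}z_0^{2}$ so that $y^{-1}xz_0=z_0^{3}$, is what makes both sides line up with the first substitution. Once this is seen, the whole proof is bookkeeping on top of Proposition \ref{prabdo}.
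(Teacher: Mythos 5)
Your proof is correct. For part (1) you do exactly what the paper does: the substitution $x=y=z_0^{-1}$ in (\ref{eqabdo}), combined with $f(z_0)=1$ and $f(y^{-1})=-f(\sigma(y))$ from Proposition \ref{prabdo}, gives $f(z_0^{-1})^2=f(\sigma(z_0))^2=1$. For part (2) your route is genuinely different. The paper tries to read the identity off Proposition \ref{prabdo} alone: it replaces $y$ by $(z_0^{2})^{-1}$ in item (2) and then invokes item (4), writing $f(z_0^{3})=f(\sigma(z_0)^{2}z_0)=-f(\sigma(z_0))$; note, however, that item (2) with $y=(z_0^{2})^{-1}$ actually yields $f(z_0^{3})=f(\sigma(z_0)^{-2}z_0)$, since $\sigma\bigl((z_0^{2})^{-1}\bigr)=\sigma(z_0)^{-2}$, so the paper's first equality does not come out as written and its chain needs repair. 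You instead go back to the functional equation itself with the two substitutions $(x,y)=(z_0,z_0^{-1})$ and $(x,y)=(\sigma(z_0)^{-1}z_0^{2},\sigma(z_0)^{-1})$, both of which produce the same left-hand side $f(z_0^{3})-f(\sigma(z_0)^{-1}z_0^{2})$; the second, together with $f(\sigma(z_0)^{-1})=-f(z_0)=-1$, forces $f(\sigma(z_0)^{-1}z_0^{2})=-f(z_0^{3})$, and eliminating this auxiliary value in the first identity gives $f(z_0^{3})=f(z_0^{-1})=-f(\sigma(z_0))$. I verified all the cancellations: they use only the centrality of $z_0$ and the involutivity of the automorphism $\sigma$, and the only prior facts needed are items (3) and (6) of Proposition \ref{prabdo}. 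So your argument is self-contained and, for part (2), arguably cleaner and more reliable than the paper's own derivation.
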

\begin{proof}
\begin{enumerate}
    \item If we take $x=y=z_{0}^{-1}$ in (\ref{eqabdo}) and using
    $z_0 \in Z(G)$ we get
    $$
    f(z_0)-f(\sigma((z_{0})^{-1}))=2f((z_{0})^{-1})^2
    $$
Using that $f(z_{0})=1$ and $f(y^{-1})=-f(\sigma(y)), \;\ y \in G,$
we get $f(z_{0}^{-1})^{2}=f(\sigma(z_0))^2=1.$
\item Replacing  $y$ by $((z_0)^{2})^{-1}$ in Proposition 4.3 (2)  and using and Proposition 4.3 (4) we get
$$
    f(z_{0}^3)=f(\sigma(z_0)^2z_{0})=f(\sigma(z_0)\sigma(z_0)z_0)=-f(\sigma(z_0)).
$$
\end{enumerate} This ends the proof.
\end{proof}
\begin{theorem}
Let $G$ be a topological group  and let $z_0 \in Z(G)$ be given. Let
$f : G \to \mathbb{C}$ be a continuous solution of the functional
equation (\ref{eqabdo}).
\begin{enumerate}
    \item If $\sigma(z_0) z_0 =e$, then  (\ref{eqabdo}) has no non-zero solutions.
    \item $\sigma(z_0) z_0 \neq e,$ then the solutions have the following
    forms

    \begin{enumerate}
        \item $ f(x)= \frac{ \chi(x) - \chi(x^{-1})}{2 \chi(z_0)}, \;\  x \in G $, where $ \chi: G \to \mathbb{C}^{*}$
         is a continuous multiplicative function such that $\chi(\sigma(x))=\chi(x)$ and  $\chi( z_0)^{2}=-1.$
        \item $f(x)= \frac{ \chi(x) - \chi(\sigma(x))}{2 \chi(z_0)},$ where $ \chi: G \to \mathbb{C}^{*}$ is a  continuous multiplicative function such that
        $\chi(x) =\chi(x^{-1})$ and $ \chi( \sigma(z_0)z_0)=-1.$
    \end{enumerate}
\end{enumerate}
\end{theorem}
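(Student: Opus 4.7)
The plan is to reduce equation (\ref{eqabdo}) to equation (\ref{eq23}) (classified in Theorems \ref{th31} and \ref{th311}) by working with the translate $h(x):=f(xz_0)$, and then transcribe the classification back to $f$. Part (1) is immediate: Proposition \ref{prabdo}(4) gives $f(y\sigma(z_0)z_0)=-f(y)$ for every $y\in G$, so if $\sigma(z_0)z_0=e$ then $f=-f$, and $f\equiv 0$, contradicting the non-triviality hypothesis.

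For Part (2), I first replace $y$ by $yz_0$ in (\ref{eqabdo}); using $z_0\in Z(G)$ to cancel $z_0^{-1}z_0$ in the first argument, and Proposition \ref{prabdo}(4) to rewrite $f(x\sigma(y)\sigma(z_0)z_0)=-f(x\sigma(y))$ in the second, I obtain
\begin{equation*}
f(y^{-1}x)+f(x\sigma(y))=2f(x)h(y),\qquad x,y\in G.
\end{equation*}
Replacing $x$ by $xz_0$ in this identity and again using centrality of $z_0$, I get
\begin{equation*}
h(y^{-1}x)+h(x\sigma(y))=2h(x)h(y),\qquad x,y\in G,
\end{equation*}
so $h$ is a continuous solution of (\ref{eq23}) on $G$ with anti-automorphism $\tau(y)=y^{-1}$. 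Moreover $h(e)=f(z_0)=1\neq 0$ by Proposition \ref{prabdo}(6), so $h\neq 0$ and Theorems \ref{th31} and \ref{th311} apply.

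To exclude the non-abelian case, suppose $h=\tfrac12\tr\pi$ with $\pi$ a continuous irreducible $2$-dimensional representation of $G$. Since $z_0\in Z(G)$, Schur's lemma yields $\pi(z_0)=\lambda I$ for some $\lambda\in\mathbb{C}^{*}$, hence $f(x)=h(xz_0^{-1})=(2\lambda)^{-1}\tr\pi(x)$ and $f(e)=\lambda^{-1}\neq 0$, contradicting $f(e)=0$ from Proposition \ref{prabdo}(1). So $h$ is abelian, and by Theorem \ref{th31} applied with $\tau(y)=y^{-1}$ has the form $h(x)=\tfrac12\bigl(\chi(x)+\chi(\sigma(x)^{-1})\bigr)$ for a continuous multiplicative $\chi:G\to\mathbb{C}^{*}$ that is either (A) $\sigma$-even or (B) satisfies $\chi(x^{-1})=\chi(x)$. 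Pushing $f(x)=h(xz_0^{-1})$ through the definition and imposing $f(e)=0$, case (A) forces $\chi(z_0)^{2}=-1$ and yields form (a), while case (B) (in which automatically $\chi(z_0)^{2}=1$) forces $\chi(\sigma(z_0)z_0)=-1$ and yields form (b); a direct substitution gives the converse.

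The step I expect to be most delicate is the derivation of the clean equation $h(y^{-1}x)+h(x\sigma(y))=2h(x)h(y)$, which converts a Van Vleck-type equation with a $z_0$-twist into a d'Alembert-type equation for $h$; it hinges on the precise interplay between Proposition \ref{prabdo}(4), the central position of $z_0$, and the substitution $y\mapsto yz_0$. After that, Schur's lemma painlessly excludes the non-abelian candidates, and the two sign conditions on $\chi$ drop out of the normalisation $f(e)=0$.
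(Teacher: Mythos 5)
Your proposal is correct, and its skeleton matches the paper's: part (1) from Proposition \ref{prabdo}(4), and for part (2) the substitution $x\mapsto xz_0$, $y\mapsto yz_0$ together with $f(x\sigma(z_0)z_0)=-f(x)$ and $z_0\in Z(G)$, which shows that $g(x)=f(xz_0)$ is a nonzero (since $g(e)=f(z_0)=1$) continuous solution of (\ref{eq23}) with $\tau(x)=x^{-1}$, after which the classification is transported back through $f(x)=g(xz_0^{-1})$ and the normalisation $f(e)=0$, forcing $\chi(\sigma(z_0)z_0)=-1$. Where you genuinely diverge is in ruling out the non-abelian branch. The paper stays inside the pre-d'Alembert machinery: it notes $g(z_0)=f(z_0^2)=0$ by Proposition \ref{prabdo}(7), computes the associated quantity $d(z_0)=2g(z_0)^2-g(z_0^2)=-f(z_0^3)=f(\sigma(z_0))\neq 0$ via Lemma \ref{lemabdo}, and invokes Stetk\ae r's Proposition 8.14(a) to conclude $g$ is abelian. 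You instead take the non-abelian form $g=\tfrac12\operatorname{tr}\pi$ from Theorem \ref{th311}, apply Schur's lemma to the central element $z_0$ to get $\pi(z_0)=\lambda I$ with $\lambda\neq 0$, and derive $f(e)=\lambda^{-1}\neq 0$, contradicting $f(e)=0$. Your route is more elementary and self-contained (it bypasses Lemma \ref{lemabdo} and the external criterion on $d$), at the modest cost of invoking Schur's lemma, which is unproblematic here since $\pi$ is an irreducible finite-dimensional complex representation of the group $G$ and $\pi(z_0)$ is invertible. A further small gain of your write-up is that you carry the two possibilities for $\chi$ ($\sigma$-even, respectively $\chi=\chi\circ(\,\cdot\,)^{-1}$) separately through the normalisation, obtaining the explicit forms (a) and (b) of the statement, whereas the paper stops at the combined expression $f(x)=\bigl(\chi(x)-\chi(\sigma(x^{-1}))\bigr)/\bigl(2\chi(z_0)\bigr)$ without spelling out the case split.
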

\begin{proof}
\begin{enumerate}
    \item If $\sigma(z_0) z_0 =e,$ by Proposition \ref{prabdo} (4) $f(x \sigma(z_0)z_0)=-f(x), \;\ x \in G,$ so    we have $f(x)=0$ for all $x \in G.$
        \item   From now,  we assume that $\sigma(z_0) z_0 \neq e.$ Replacing $x$ by $x z_0$ and $y$ by $y z_0$ in (\ref{eqabdo}) and using
        that $ z_0 \in Z(G)$  we get $$ f(y^{-1} x z_0) - f(x \sigma(y)z_0 \sigma(z_0) z_0)=2f(x z_0)f(y z_0)$$
        since $f(x \sigma(z_0)z_0)=-f(x), \;\ y \in G,$ then  $$ f(y^{-1} x z_0) +f(x \sigma(y) z_0 )=2f(x z_0)f(y z_0) , \;\ x, y \in G.$$
    Consider $g(x)=f(x z_0), \;\ x \in G,$ then the function $g$ satisfies (\ref{eq23})  with $\tau(x)=x^{-1}$ and $
    g(e)=1,$ so $g\neq0$. As $g$ is a solution of equation (\ref{eq23})  then by Proposition
\ref{pr22} (3) $g$ is a solution of pre-d'Alembert functional
equation. From Proposition 4.1 (7) we get $g(z_0)=0$ and by Lemma
\ref{lemabdo} we have $d(z_0) = 2g(z_0)^2 - g(z_{0}^2)=0-f(z_{0}^3)=
0 - (-f(\sigma(z_0)) = f(\sigma(z_0)) \neq 0.$  So, we get $g(z_0)^2
\neq d(z_0).$ According to \cite[ Proposition 8.14(a)]{17} we have
$g$ is abelian. Furthermore, from Corollary 3.6 (1) $g
(x)=\frac{\chi(x)+\chi(\sigma(x^{-1}))}{2}$ for all $ x \in G,$
where $\chi$ is a non-zero continuous multiplicative function  such
that $\chi$ is $\sigma$-even and/or $\chi(x) = \chi(x^{-1})$. Since
$g(z_0)=0,$ so $ \chi(z_0)=-\chi(\sigma(z_{0}^{-1}))$, which implies
that  $\chi(z_0\sigma(z_{0}))=-1.$  We have $f(x)=g(x z_{0}^{-1})$
and $\chi(z_0\sigma(z_{0}))=-1$, so we get that $ f(x)= \frac{
\chi(x) - \chi(\sigma(x^{-1}))}{2 \chi(z_0)}.$ This completes the
proof.
\end{enumerate}
\end{proof}
\begin{theorem}\label{th3122} Let $M$ be a topological monoid.
Let $z_{0}$ be an arbitrarily fixed element in $M.$  There are two
non-zero continuous solutions $f:G\rightarrow \mathbb{C}$ of
(\ref{eqabdo1}), abelian and non-abelian.
\begin{enumerate}
    \item [$\rm(1)$]  Every {abelian} continuous solution $f$ is of the form
   $$f=f(e)\frac{\chi+\chi\circ\sigma\circ\tau}{2},$$ where $\chi:G\rightarrow \mathbb{C}$ is a continuous  multiplicative function such that:
\begin{enumerate}
  \item [$\rm(i)$] $\chi\circ\sigma\circ\tau=\chi\circ\tau\circ\sigma,$
  \item[$\rm(ii)$] $\chi$ is $\sigma$-even and/or $\tau$-even, and
    \item [$\rm(iii)$] $\chi(\sigma(\tau(z_0)))=\chi(z_0)=f(e).$
\end{enumerate}
\item[$\rm(2)$]   Every {non-abelian} continous solution $f$ is of the form
$$f(x)=\frac{f(e)\chi_{\pi}(x)}{2}, \ \ \ x\in G$$ where $\pi$ is a continuous irreducible  representation of dimension $2$
  for which  $adj(\pi \circ \sigma)=\pi\circ\tau,$  $ \textrm{tr}(\pi(x))=\textrm{tr}(\pi(\sigma(x))), \;\ x \in M,$ and $\pi(z_{0})= f(e) I_2$
\end{enumerate}
\end{theorem}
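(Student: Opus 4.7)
The plan is to reduce equation (\ref{eqabdo1}) to equation (\ref{eq23}) by absorbing the right-multiplication by $z_0$ into a normalization of $f$, and then feed the result through Theorems \ref{th31} and \ref{th311}.

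First I would set $y=e$ in (\ref{eqabdo1}). Since $\sigma(e)=\tau(e)=e$, this yields $2f(xz_0)=2f(x)f(e)$, i.e.\
$$f(xz_0)=f(e)f(x) \qquad (\ast)$$
for all $x\in M$. If $f(e)=0$ then $(\ast)$ forces $f(\cdot\, z_0)\equiv 0$, and then the left side of (\ref{eqabdo1}) vanishes, so $2f(x)f(y)\equiv 0$, contradicting $f\neq 0$. Hence $f(e)\neq 0$. Define $F:=f/f(e)$, so $F$ is continuous with $F(e)=1$. Applying $(\ast)$ to the two arguments $x\sigma(y)z_0$ and $\tau(y)xz_0$ in (\ref{eqabdo1}) and dividing through by $f(e)^2$, I obtain
$$F(x\sigma(y))+F(\tau(y)x)=2F(x)F(y),\qquad x,y\in M,$$
so $F$ is a non-zero continuous solution of (\ref{eq23}). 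Theorems \ref{th31} and \ref{th311} then split the analysis into an abelian case $F=(\chi+\chi\circ\sigma\circ\tau)/2$ with a multiplicative $\chi$ satisfying conditions (i)--(ii), and a non-abelian case $F=\frac{1}{2}\chi_{\pi}$ with $\pi$ a two-dimensional irreducible representation satisfying $\mathrm{adj}(\pi\circ\sigma)=\pi\circ\tau$ and $\tr\pi=\tr(\pi\circ\sigma)$.

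Next I would read $(\ast)$ back into $F$ as $F(xz_0)=f(e)F(x)$ and extract the remaining $z_0$-condition in each case. In the abelian case, using $\tau(xz_0)=\tau(z_0)\tau(x)$ and $\sigma(\tau(xz_0))=\sigma(\tau(z_0))\sigma(\tau(x))$, the identity $F(xz_0)=f(e)F(x)$ expands to
$$\chi(z_0)\chi(x)+\chi(\sigma\tau(z_0))\chi(\sigma\tau(x))=f(e)[\chi(x)+\chi(\sigma\tau(x))].$$
When $\chi\neq\chi\circ\sigma\circ\tau$, the theory of multiplicative functions (\cite[Thm.~3.18]{17}) gives linear independence, so matching coefficients yields $\chi(z_0)=\chi(\sigma(\tau(z_0)))=f(e)$; when $\chi=\chi\circ\sigma\circ\tau$ the same conclusion is immediate. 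That is condition (iii). In the non-abelian case, $F(xz_0)=f(e)F(x)$ becomes $\tr(\pi(x)[\pi(z_0)-f(e)I_2])=0$ for every $x\in M$. Irreducibility of $\pi$ together with Burnside's theorem (as already used in the proof of Theorem \ref{th311}) makes $\{\pi(x):x\in M\}$ span $\mathcal{L}(V)$, whence $\tr(A[\pi(z_0)-f(e)I_2])=0$ for every $A\in\mathcal{L}(V)$, forcing $\pi(z_0)=f(e)I_2$. This gives $f(x)=f(e)\chi_{\pi}(x)/2$ with the stated conditions.

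The converse direction is a direct substitution: for each form the relation $(\ast)$ holds by construction (multiplicativity of $\chi$, or $\pi(z_0)$ being a scalar), and plugging into (\ref{eqabdo1}) reduces to the already verified equation (\ref{eq23}) for $F$. The main obstacle is the Burnside step in the non-abelian case, but it is essentially the same argument as the one in the proof of Theorem \ref{th311}, so no new idea is needed; everything else is bookkeeping.
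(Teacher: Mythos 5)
Your proposal is correct and follows essentially the same route as the paper: set $y=e$ to obtain $f(xz_0)=f(e)f(x)$, normalize $f$, and feed the resulting solution of (\ref{eq23}) through Theorems \ref{th31} and \ref{th311}, then extract the $z_0$-condition. You in fact supply the details the paper leaves implicit (the argument that $f(e)\neq 0$, the linear-independence step yielding condition (iii), and the Burnside step forcing $\pi(z_{0})=f(e)I_2$), and your normalization $F=f/f(e)$ is the correct one, the paper's $g=f/f(z_{0})$ being an evident slip since $f(z_{0})=f(e)^{2}$.
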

\begin{proof}
Taking $y = e$ in (\ref{eqabdo1}), we get that
\begin{equation}\label{eqaaa}
f(xz_{0}) = f(x)f(e), x \in M.
\end{equation}
We have  $f \neq 0,$ so  $f(e) \neq 0.$ Using (\ref{eqabdo1}) and
(\ref{eqaaa}) twe get that the function $
g(x)=\frac{f(x)}{f(z_{0})}$ is a solution of  (\ref{eq23}).
Therefore, by Theorem \ref{th31}, Theorem \ref{th311} and the identity (\ref{eqaaa}) it is easy to   get the form of solutions of (\ref{eqabdo1}).\\
Conversely, It is easy to check that the all of the possibilities
listed in Theorem \ref{th3122} define solutions of (\ref{eqabdo1}).
\end{proof}
\begin{example}
For a non-abelian example of Monoid, we consider $G=M(2,\mathbb{C})$
and take $\sigma$ as an automorphism and $\tau$ as an
anti-automorphism such that
\begin{equation*}
\sigma\left(
        \begin{array}{cc}
          a & b \\
          c & d \\
        \end{array}
      \right)=J\left(
                 \begin{array}{cc}
                   a & b \\
                   c & d \\
                 \end{array}
               \right) J=\left(
                 \begin{array}{cc}
                   d & c \\
                   b & a \\
                 \end{array}
               \right)
\end{equation*}
where $J=\left(
                 \begin{array}{cc}
                   0 & 1 \\
                   1& 0 \\
                 \end{array}
               \right)$
 and
\begin{equation*}
\tau\left(
        \begin{array}{cc}
          a & b \\
          c & d \\
        \end{array}
      \right)=\left(
                 \begin{array}{cc}
                   a & -b \\
                   -c & d \\
                 \end{array}
               \right)
\end{equation*}
We indicate here the corresponding continuous solutions of
(\ref{eq23}). Simple computations show that
\begin{equation*}
\sigma \circ \tau =\tau\circ\sigma
\end{equation*}
Note that $$T=\left(
                   \begin{array}{cc}
                     a & c \\
                     b & d \\
                   \end{array}
                 \right).$$
    \textbf{Abelian solutions}.\\
The continuous non-zero multiplicative functions on $M(2, C)$ are
given   (see \cite[ Example 5.6]{2}):  $\mu= 1,$ or else
$$
\begin{cases}
 \chi(T)=\left|\det(T)\right|^{\lambda-n}(det(T))^{n} &\textrm{ when} \;\ det(T) \neq 0\\
0                                                    & \textrm{when}
\;\ det(T) = 0,
 \end{cases}
$$
where $\lambda \in \mathbb{C}$  with $\textrm{Re}{\lambda} > 0$ and
$n \in  \mathbb{Z}.$
It is clear that $ \chi(\sigma(\tau(T)))= \chi(\tau(\sigma(T))),$ $\chi(\sigma(T))= \chi(\tau(T))=\chi(T).$ It follows that the abelian  continuous solutions  are $f(T)=\left|ad-bc\right|^{\lambda-n}(ad-bc)^{n}.$\\
\textbf{Non-abelian solutions}.\\
It is clear that $tr(T)=tr(\sigma(T))$, $
\textrm{Adj}(\sigma(T))=\tau(T)$ and  $\sigma\circ\tau
(T)=\tau\circ\sigma (T).$ By using Theorem \ref{th31}, the
non-abelian non-zero continuous solutions $f:M(2,
\mathbb{C})\rightarrow \mathbb{C}$ of (\ref{eq32}) are
$f=\frac{1}{2}tr.$

\end{example}

\begin{example}

        The $3$-dimension  Heisenberg group $G = H_3$ described in \cite[ Example A.17(a)]{17},
   and take as the involutive automorphism (see \cite{2}) $$\sigma\begin{pmatrix}
     1 & a & c  \\
      0 & 1 & b\\
0 & 0 & 1
        \end{pmatrix}=\begin{pmatrix}
     1 & -b & -c+ab  \\
      0 & 1 & -a\\
0 & 0 & 1
        \end{pmatrix}$$     and $$\sigma\begin{pmatrix}
     1 & a & c  \\
      0 & 1 & b\\
0 & 0 & 1
        \end{pmatrix}=\begin{pmatrix}
     1 & -a & -c+ab  \\
      0 & 1 & -b\\
0 & 0 & 1
        \end{pmatrix}.$$
        It easy to see that $$\sigma \circ \tau = \tau \circ \sigma\begin{pmatrix}
     1 & a & c  \\
      0 & 1 & b\\
0 & 0 & 1
        \end{pmatrix}=\begin{pmatrix}
     1 & b & c  \\
      0 & 1 & a\\
0 & 0 & 1
        \end{pmatrix}.$$
    The continuous non-zero multiplicative functions on $G$ are given   (see [\cite{17}):
$$
\mu_{\alpha,\beta}(a,b)=\exp^{\alpha a + \beta b}, \;\ \alpha, \beta
\in \mathbb{C}.
$$
 We compute that $\mu=\mu \circ \sigma $ if and only if $\beta =-\alpha,$ and that $\mu=\mu \circ \tau $ if and only if $\alpha=\beta =0.$
Then
$$
f((a,b))= (\exp^{\alpha (a -  b)} + \exp^{\alpha (b - a)} )/2
$$

 By \cite[Theorem 8.29.]{17}  Any continuous solution of the pre-d'Alembert functional equation  on
$G,$ is abelian.  In particular it has no  non-abelian solution on
$G.$

\end{example}

\end{document}